\documentclass[12pt]{amsart}

\usepackage[T1]{fontenc}
\usepackage{amsmath,amssymb} % formulas e simbolos adicionais
\usepackage{tikz} %quando codigo de um gráfico foi passado para o tex
\usepackage{pgf} %quando o código de um gráfico foi passado para o tex
\usepackage{hyperref,pstricks,graphicx} % enlaçamento
\usepackage{amsthm}% teoremas
\usepackage{geometry} % margens
\usepackage{enumitem}
\usepackage{ulem}
\usepackage{wrapfig}
\usepackage{multicol}
\usepackage{subcaption}

\newtheorem{theorem}{Theorem}[section]

\newtheorem{lemma}[theorem]{Lemma}

\newtheorem{remark}[theorem]{Remark}
\newtheorem{definition}[theorem]{Definition}
\newtheorem{proposition}[theorem]{Proposition}
\newtheorem{example}[theorem]{Example}

%\geometry{
%a4paper,
%top=20mm,
%bottom=20mm,
%left=20mm,
%right=20mm,
%}

%\linespread{1.3}
%\geometry{lmargin=3cm,tmargin=3cm,bmargin=3cm,rmargin=3cm}

\title[Bifurcation and hyperbolicity for a nonlocal problem]{Bifurcation and hyperbolicity for a nonlocal quasilinear parabolic problem}

\author[J.M.  Arrieta]{Jos\'e M. Arrieta}
\address{Departamento de An\'alisis y Matem\'atica Aplicada, Universidad Complutense de Madrid, 28040 Madrid, Spain and Instituto de Ciencias Matem\'aticas ICMAT-UAM-UC3M-UCM}
\email{arrieta@mat.ucm.es}
\thanks{JMA is partially supported by grants PID2019-103860GB-I00 and CEX2019-000904-S ``Severo Ochoa Programme for Centres of Excellence in R\&D'' both from MICINN, Spain. Also by ``Grupo de Investigaci\'on 920894 - CADEDIF'', UCM, Spain.}

\author[A. N. Carvalho]{Alexandre N. Carvalho}
%\address{Instituto de Ci\^{e}ncias Ma\-te\-m\'{a}\-ti\-cas e de Computa\c{c}\~{a}o\\ Universidade de S\~{a}o Paulo-Campus de S\~{a}o Carlos \\	Caixa Postal 668, S\~{a}o Carlos SP, Brazil
	%Mathematical Research Section, School of Mathematical Sciences, Australian National University, Canberra ACT 2601, Australia
%}

\thanks{ANC is partially supported by Grants FAPESP 2020/14075-6  and CNPq 306213/2019-2}

\author[E. M. Moreira]{Estefani M. Moreira}
\address{Departamento de Matem\'{a}tica, Instituto de Ci\^{e}ncias Ma\-te\-m\'{a}\-ti\-cas e de Computa\c{c}\~{a}o, Universidade de S\~{a}o Paulo, Campus de S\~{a}o Carlos, Caixa Postal 668, S\~{a}o Carlos, 13560-970, Brazil}
\email{andcarva@icmc.usp.br}
\email{estefani@usp.br}
\thanks{EMM is supported by CAPES, FAPESP Grant 2018/00065-9 and FAPESP Grant 2020/00104-4}

\author[J. Valero]{Jos\'e Valero}
\address{Centro de Investigaci\'on Operativa, Universidad Miguel Hern\'andez de Elche, Avenida Universidad s/n, 03202 Elche, Spain}
\email{jvalero@umh.es}
\thanks{JV is partially supported by the Spanish Ministry of Science, Innovation and Universities, project PGC2018-096540-B-I00, by the Spanish Ministry of Science and Innovation, project PID2019-108654GB-I00, by the Junta de Andalucía and FEDER, project P18-FR-4509, and by the Generalitat Valenciana, project PROMETEO/2021/063}

\subjclass[2020]{35B32, 35K57, 35K58, 35K59, 37B30, 35K55, 37B35}
%35B32   	Bifurcations in context of PDEs 
%35P05   	General topics in linear spectral theory for PDEs
%35K57   	Reaction-diffusion equation
%35K58   	Semilinear parabolic equations
%35K59   	Quasilinear parabolic equations
%37B30   	Index theory for dynamical systems, Morse-Conley indices
%35B51   	Comparison principles in context of PDEs
%35K55   	Nonlinear parabolic equations
% 37B35   	Gradient-like and recurrent behavior; isolated (locally maximal) invariant sets; attractors, repellers for topological dynamical systems

\keywords{Hyperbolicity of equilibria, attractors, bifurcation}

\begin{document}
	
\begin{abstract}
In this article, we study a one-dimensional nonlocal quasilinear problem of the form $u_t=a(\Vert u_x\Vert^2)u_{xx}+\nu f(u)$, with Dirichlet boundary conditions on the interval $[0,\pi]$, where $0<m\leq a(s)\leq M$ for all $s\in \mathbb{R}^+$ and $f$ satisfies suitable conditions.  We give a complete characterization of the bifurcations and of the hyperbolicity of the corresponding equilibria. With respect to the bifurcations we extend the existing result when the function $a(\cdot)$ is non-decreasing to the case of general smooth nonlocal diffusion functions showing that bifurcations may be pitchfork or saddle-node, subcritical or supercritical. We also give a complete characterization of hyperbolicity specifying necessary and sufficient conditions for its presence or absence. We also explore some examples to exhibit the variety of possibilities that may occur, depending of the function $a$, as the parameter $\nu$ varies.
\end{abstract}
	
	\maketitle
	
\section{Introduction}

This paper is dedicated to the study of the stationary solutions of the following nonlocal quasilinear parabolic problem
\begin{equation}\label{eq_non-local}
	\left\{\begin{aligned}
		& u_t=a(\Vert u_x\Vert^2)u_{xx}+\nu f(u),\ x \in (0,\pi), \  t>0,\\
		& u(0,t)=u(\pi,t)=0, \ t\geq 0, \\
		& u(\cdot,0)=u_0(\cdot)\in H^1_0(0,\pi),
	\end{aligned}\right. 
\end{equation}
where $\nu >0$ is a parameter, $a:\mathbb{R^+}\to [m,M]\subset (0,+\infty)$ is a continuously differentiable function, $f \in C^2(\mathbb{R})$, with 
\begin{equation} \label{eq_prop_f}
f(0)=0, \ f'(0)=1,\ 	f''(u)u< 0, \ \forall\, u \neq 0,
	\hbox{ and }\ {\limsup_{|u|\rightarrow +\infty } \frac{f(u)}{u} <0}.
\end{equation}

Here $\|\cdot\|$ denotes the usual norm in $L^2(0,\pi)$.

%%%\bigskip
For the stationary solutions of \eqref{eq_non-local} we analyze the bifurcations of the equilibria and their hyperbolicity. 
This analysis has been carried out in \cite{CLLM,Estefani2} for the particular case when $a$ is increasing and $f$ is odd (see also \cite{CCM-RV21C-Inf} for the study of the bifurcation when $f$ is not necessarily odd).  These two conditions considerably simplifies the structure of the bifurcations. In that case bifurcations are only from zero and they are all supercritical pitchfork bifurcations just like the local case $a=$const. Here we prove hyperbolicity and identify the bifurcations in the general case when $a$ is not necessarily increasing and $f$ is not necessarily odd. We will see that, in this general case, besides the supercritical pitchfork bifurcations, subcritical pitchfork bifurcations and saddle-node (subcritical and supercritical) bifurcations may occur.

The proof of hyperbolicity presented here is rather simple compared with that of \cite{Estefani2}. Nonetheless, an important part of the analysis is dependent on the analysis done in \cite{Estefani2} to view the quasilinear nonlocal problem \eqref{eq_non-local} as a semilinear nonlocal problem. We briefly recall this analysis to take advantage of it.

Consider the auxiliary semilinear nonlocal parabolic problem
\begin{equation}\label{eq_nl_changed}
\left\{
\begin{aligned}
& w_\tau = w_{xx} + \frac{\nu f(w)}{a(\| w_x\|^2)},  \, x\in (0, \pi),\, \tau>0,\\
& w(0, \tau)=w(\pi, \tau)=0,  \ \  \tau\geq 0,\\
& w(\cdot,0)=u_0(\cdot)\in H^1_0(0,\pi).
\end{aligned}
\right.
\end{equation}
Proceeding as in  \cite{Estefani2,CLLM}, \eqref{eq_nl_changed} is locally well-posed and the solutions are jointly continuous with respect to time and initial conditions. Changing the time variable to $t=\int_0^\tau a(\|w_x(\cdot,\theta)\|^2)^{-1}d\theta$ we have that $u(x,t)=w(x,\tau)$ is the unique solution of \eqref{eq_non-local}. As a consequence, \eqref{eq_non-local} is globally well-posed. If we define $S(t):H^1_0(0,\pi)\to H^1_0(0,\pi)$ by $S(t)u_0=u(t,u_0)$, $t\geq 0$, where $u(\cdot ,u_0):\mathbb{R}^+\to H^1_0(0,\pi)$ is the solution of \eqref{eq_non-local}, then $\{S(t):t\geq 0\}$ is a semigroup that has a global attractor $\mathcal{A}$. We say that a continuous function $v: \mathbb{R} \mapsto H^1_0(0,\pi)$ is a global solution for the semigroup $\{S(t):t\geq 0\}$ if it satisfies $v(t+s)=S(t)v(s)$ for all $t\geq 0$ and for all $s\in \mathbb{R}$. The global attractor can be characterized in terms of the global solutions in the following way
$$
\mathcal{A}=\{v(0): v:\mathbb{R} \to H^1_0(0,\pi) \hbox{ is a global bounded solution for } \{S(t):t\geq 0\} \}.
$$

In addition, the semigroup  $\{S(t):t\geq 0\}$ is gradient with Lyapunov function given by
\begin{equation}\label{LyapunovFunction}
V(u) = \frac12 A(\|u_x\|^2) -\nu \int_0^\pi F(u(x)) dx,
\end{equation}
where $A(s)={\displaystyle\int_0^s} a(\theta)d\theta$ and $F(s)={\displaystyle\int_0^s} f(\theta)d\theta$. 
Denote by $\mathcal{E}$ the set of equilibria of \eqref{eq_non-local}, that is, the set of solutions of
\begin{equation}\label{sl:non-local}\left\{
\begin{aligned}
& a(\Vert \varphi_x\Vert^2)\varphi_{xx}+\nu f(\varphi) =0, \ x \in (0,\pi), \\
& \varphi(0)=\varphi(\pi)=0.
\end{aligned}\right.
\end{equation} 
Then, for each $u \in H^1_0(0,\pi)$, $S(t)u_0 \stackrel{t\to +\infty}{\longrightarrow} \mathcal{E}$ and
\begin{equation*}
\begin{aligned}
\mathcal{A}=W^u(\mathcal{E}) = \Big\{ u\in H^1_0(0,\pi): & \ \hbox{there exists a global solution } \xi:\mathbb{R}\to H^1_0(0,\pi)\\
& \hbox{satisfying } \xi(0)=u \hbox{ and } \inf_{\varphi\in \mathcal{E}}\|\xi(t)-\varphi\|_{H^1_0(0,\pi)}\stackrel{t\to +\infty}{\longrightarrow} 0\Big\}.
\end{aligned}
\end{equation*}
In particular if $\mathcal{E}$ is finite
\begin{equation}\label{Att_charac}
\mathcal{A} = \bigcup_{\phi\in \mathcal{E}} W^u(\phi).
\end{equation}
Additionally, for any $u_0\in H^1_0(0,\pi)$ there is a $\phi\in \mathcal{E}$ such that $S(t)u_0\stackrel{t\to +\infty}{\longrightarrow} \phi$ and, for any bounded global solution $v:\mathbb{R} \to H^1_0(0,\pi)$ there are $\phi_-,\phi_+\in \mathcal{E}$ such that
$$
\phi_{-}\stackrel{t\to -\infty}{\longleftarrow}\xi(t)\stackrel{t\to +\infty}{\longrightarrow}\phi_{+}.
$$

Let us recall the definitions of local stable and unstable manifolds and the notion of hyperbolicity (see \cite{Estefani2}) which applies to \eqref{eq_non-local}.

\begin{definition} Given a neighborhood ${\mathcal V}_\delta(\phi)=\{u\in H^1_0(0,\pi): \|u-\phi\|_{H^1_0(0,\pi)}<\delta\}$ of $\phi$, the \emph{local stable and unstable sets} of $\phi$ associated to ${\mathcal V}_\delta(\phi)$, are given by
	\begin{equation*}
	\begin{split}
	W^{s,\delta}_{loc}(\phi) = &\{u \in H^1_0(0,\pi): S(t) u \in {\mathcal V}_\delta \hbox{ for all } t \geq  0, \hbox{ and } S(t) u\stackrel{t\to+\infty}{\longrightarrow} \phi\}, \\
	W^{u,\delta}_{loc}(\phi) = &\{u \in  H^1_0(0,\pi): \hbox{ there exists a global solution } v  \hbox{  of }  \{S(t)\colon t\geq 0\}  \\
	& \hspace{40pt} \hbox{ with }v(0)=u, \ v(t)\in {\mathcal V}_\delta \hbox{ for all }  t\leq 0 \hbox{ and }v(t)\stackrel{t\to -\infty}{\longrightarrow} \phi\}.
	\end{split}
	\end{equation*}
\end{definition}

When $\phi$ is a maximal invariant set in a neighborhood of itself and $W^{u,\delta}_{loc}(\phi)=\{\phi\}$, it is asymptotically stable; otherwise it is unstable. In this case all solutions that remain in ${\mathcal V}_\delta(\phi)$ for all $t\geq 0$ ($t\leq 0$) must converge forwards (backwards) to $\phi$. We refer to this property as topological hyperbolicity.

\begin{definition}[Strict Hyperbolicity, \cite{Estefani2}]\label{SH} An equilibrium $\phi$ of \eqref{eq_non-local} is said to be {\bf hyperbolic} if there are closed linear subspaces $X_u$ and $X_s$ of $H^1_0(0,\pi)$ with $H^1_0(0,\pi)=X_u\oplus X_s$ such that
	\begin{itemize}
		\item $\{\phi\}$ is topologically hyperbolic.
		\item The local stable and unstable sets are given as graphs of Lipschitz maps $\theta_u:X_u\to X_s$ and 
		$\theta_s:X_s\to X_u$, with Lipschitz constants $L_s$, $L_u$ in $(0,1)$ and such that $\theta_u(0)=\theta_s(0)=0$, and there exists $\delta_0>0$ such that, given $0<\delta<\delta_0,$ there are $0<\delta''<\delta'<\delta$ such that 
		\begin{equation*}
		\begin{split}
		\{\phi\!+\!(x_u,\theta_u(x_u))\!:\! x_u\!\in\! X_u, \|x_u\|_{H^1_0}\!<\!\delta''\}&\!\subset\! W^{u,\delta'}_{loc}(\phi) \\
		&\!\subset\! \{\phi\!+\!(x_u,\theta_u(x_u))\!:\! x_u\!\in\! X_u, \|x_u\|_{H^1_0}\!<\!\delta\},\\
		\{\phi\!+\!(\theta_s(x_s),x_s)\!:\! x_s\in X_s, \|x_s\|_{H^1_0}\!<\!\delta''\}&\!\subset\! W^{s,\delta'}_{loc}(\phi)\\
		&\!\subset\! \{\phi\!+\!(\theta_s(x_s),x_s)\!:\! x_s\!\in\! X_s, \|x_s\|_{H^1_0}\!<\!\delta\}.
		\end{split}
		\end{equation*}
	\end{itemize}
\end{definition}

Proceeding as in \cite{Estefani2}, we will show strict hyperbolicity of equilibria for \eqref{eq_non-local} in the following way. First we note that the equilibria of  \eqref{eq_non-local} and \eqref{eq_nl_changed} are the same. Then we consider the linearization around an equilibrium for the semilinear problem \eqref{eq_nl_changed} and prove their hyperbolicity (showing that zero is not in the spectrum of the linearized self-adjoint nonlocal operator). Then we use the solution dependent change of time scale to conclude the hyperbolicity for \eqref{eq_non-local}.

In \cite{CCM-RV21C-Inf,CLLM,Estefani2}, the authors proved the following:
\begin{theorem}\label{theo:exist.equilibria} Assume that the function $a(\cdot)$ is increasing
and that $f$ is odd. 
	If $a(0)N^2< \nu\leq a(0)(N+1)^2,$ then there are $2N+1$ equilibria of the equation \eqref{eq_non-local}; $\{0\}\cup\left\{\phi_{j}^\pm: j=1,\dots, N\right\}$,
	where $\phi^+_{j}$ and $\phi^-_{j}$  have $j+1$ zeros in $[0,\pi]$ and $\phi^-_{j}(x)=-\phi^+_{j}(x)$ for all $x\in[0,\pi]$ and $\phi^+_{j}(x)>0$ for all $x\in(0,\frac{\pi}{j})$.
	The sequence of bifurcation given above satisfies:
	\begin{itemize}
		\item[] {\underline{Stability}}: If $\nu\leq a(0)$, $0$ is the only equilibrium of \eqref{eq:C-I} and it is stable. If $\nu>a(0)$, the positive equilibrium $\phi_1^+$ and the negative equilibrium $\phi_1^-$ are stable and any other equilibrium is unstable.

		\item[] {\underline{Hyperbolicity}}:  For all $\nu>0$, the equilibria are hyperbolic with the exception of $0$ in the cases $\nu=a(0)N^2$,  for $N \in \mathbb{N}^*$.

	\end{itemize}
\end{theorem}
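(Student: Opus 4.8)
The plan is to reduce everything to the classical (local) Chafee--Infante problem and then treat the nonlocality as a rank-one perturbation of a self-adjoint operator.

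\emph{Existence and count.} For a fixed value $s=\|\phi_x\|^2$, a solution $\phi$ of \eqref{sl:non-local} is exactly a solution of the local problem $\psi_{xx}+\lambda f(\psi)=0$, $\psi(0)=\psi(\pi)=0$, with $\lambda=\nu/a(s)$. Under \eqref{eq_prop_f} the solutions of this local problem are $\psi\equiv0$ together with, for each $j$ with $\lambda>j^2$, a pair $\pm\psi_j(\cdot\,;\lambda)$ depending smoothly on $\lambda$, where $\psi_j$ has exactly $j+1$ zeros, is positive on $(0,\pi/j)$, and oddness of $f$ gives $\psi_j^-=-\psi_j^+$. Writing $G_j(\lambda)=\|\partial_x\psi_j(\cdot\,;\lambda)\|^2$, the self-consistency $s=\|\phi_x\|^2$ becomes the scalar equation $H_j(\lambda):=\lambda\,a(G_j(\lambda))=\nu$ on $(j^2,\infty)$. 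I would record that $G_j(\lambda)\to0$ as $\lambda\downarrow j^2$ (bifurcation from zero), so $H_j(j^2)=a(0)j^2$, that $H_j(\lambda)\ge m\lambda\to\infty$, and that $H_j'(\lambda)=a(G_j)+\lambda\,a'(G_j)G_j'\ge m>0$ using $a'\ge0$ (here $a$ is increasing) and $G_j'\ge0$. Hence $H_j$ is a strictly increasing bijection from $(j^2,\infty)$ onto $(a(0)j^2,\infty)$, so $H_j=\nu$ has a unique root iff $\nu>a(0)j^2$. For $a(0)N^2<\nu\le a(0)(N+1)^2$ this occurs precisely for $j=1,\dots,N$, giving the $2N+1$ equilibria $\{0\}\cup\{\phi_j^\pm\}_{j=1}^N$ with the stated nodal and symmetry properties.

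\emph{Hyperbolicity.} The equilibria are the critical points of the Lyapunov functional \eqref{LyapunovFunction}, and the relevant self-adjoint operator is the one associated with the Hessian
\[ V''(\phi)[v,v]=a\|v_x\|^2+2a'\langle\phi_x,v_x\rangle^2-\nu\int_0^\pi f'(\phi)v^2\,dx, \]
with $a=a(\|\phi_x\|^2)$, $a'=a'(\|\phi_x\|^2)$. Integrating the cross term by parts I would write the associated operator as $\mathcal L_\phi=\mathcal L_0+2a'\langle\phi_{xx},\cdot\rangle\phi_{xx}$, where $\mathcal L_0 v=-av_{xx}-\nu f'(\phi)v$ is the (rescaled) local linearization and the added term is a rank-one, positive semidefinite perturbation since $a'\ge0$. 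For $\phi=\psi_j$ the local equilibria are hyperbolic of Morse index $j-1$, so $\mathcal L_0$ is invertible, and a Weinstein--Aronszajn (Sherman--Morrison) computation shows $0\in\sigma(\mathcal L_\phi)$ forces $1=-2a'\langle\phi_{xx},\mathcal L_0^{-1}\phi_{xx}\rangle$. Differentiating $\psi_{xx}+\lambda f(\psi)=0$ in $\lambda$ gives $\mathcal L_0(\partial_\lambda\psi)=a f(\psi)$, whence, using $\phi_{xx}=-\lambda f(\psi)$ and $G_j'(\lambda)=2\lambda\langle f(\psi),\partial_\lambda\psi\rangle$,
\[ \langle\phi_{xx},\mathcal L_0^{-1}\phi_{xx}\rangle=\frac{\lambda^2}{a}\langle f(\psi),\partial_\lambda\psi\rangle=\frac{\nu\,G_j'(\lambda)}{2a^2}\ge0. \]
As the right-hand side is $\ge0$ while $-1/(2a')<0$, the eigenvalue condition can never hold; together with the trivial case $a'=0$ (where $\mathcal L_\phi=\mathcal L_0$) this gives hyperbolicity of every $\phi_j^\pm$ for all admissible $\nu$. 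For $\phi=0$ one has $\mathcal L_0=-a(0)\partial_{xx}-\nu$ with eigenvalues $a(0)k^2-\nu$, so $0$ is hyperbolic iff $\nu\neq a(0)N^2$, which is exactly the exceptional set.

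\emph{Stability and transfer.} To count negative eigenvalues I would deform $\mathcal L_\phi^{(t)}=\mathcal L_0+t\,2a'\langle\phi_{xx},\cdot\rangle\phi_{xx}$, $t\in[0,1]$; the same computation gives $0\notin\sigma(\mathcal L_\phi^{(t)})$ for every $t$, so no eigenvalue crosses zero and the Morse index is constant, hence equal to that of $\mathcal L_0$, namely $j-1$. Thus $\phi_1^\pm$ have index $0$ and are stable, $\phi_j^\pm$ ($j\ge2$) have index $\ge1$ and are unstable, and $0$ has index $\#\{k:a(0)k^2<\nu\}=N$, stable for $\nu<a(0)$ and unstable for $\nu>a(0)$; when $\nu\le a(0)$ the first paragraph gives no nontrivial branch, and the gradient structure ($0$ a strict local minimum of $V$) yields stability even at the degenerate value $\nu=a(0)$. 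Since the linearizations of \eqref{eq_nl_changed} and \eqref{eq_non-local} at $\phi$ are positive-multiple rescalings of $-\mathcal L_\phi$, they share the sign structure of $\sigma(\mathcal L_\phi)$, and strict hyperbolicity in the sense of Definition \ref{SH} then follows through the solution-dependent time change exactly as in \cite{Estefani2}.

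The one genuinely analytic input, and the step I expect to require the most care, is the sign $G_j'(\lambda)\ge0$, i.e.\ that the Dirichlet energy of the $j$-th Chafee--Infante branch is nondecreasing in $\lambda$; this is the classical strict monotonicity of the time map under $f''(u)u<0$. Everything else is the linear algebra of rank-one perturbations, hinging on the clean identification $\langle\phi_{xx},\mathcal L_0^{-1}\phi_{xx}\rangle=\nu G_j'(\lambda)/(2a^2)$ that links the perturbation determinant to the geometry of the branch.
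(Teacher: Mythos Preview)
Your proposal is correct and follows essentially the same strategy as the paper (which states Theorem~\ref{theo:exist.equilibria} as a citation but recovers it from the general Theorems~\ref{theo:Identify_equil_nl}, \ref{theo:hyp_semilinear} and \ref{theo:Morse_index}): reduce equilibria to the local Chafee--Infante branches via $\lambda=\nu/a(\|\phi_x\|^2)$, then treat the linearization as a rank-one perturbation of the local linearization, with the key analytic input being the monotonicity of $\lambda\mapsto\|(\phi_{j,\lambda})_x\|^2$.

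The only noteworthy difference is presentational. You parameterize by $\lambda$ and work with the Hessian $\mathcal L_\phi$ of the Lyapunov functional, using the Weinstein--Aronszajn/Sherman--Morrison determinant directly and a homotopy $t\mapsto\mathcal L_0+t\cdot 2a'\langle\phi_{xx},\cdot\rangle\phi_{xx}$ to preserve the Morse index. The paper parameterizes by $r=\|\phi_x\|^2$, linearizes the semilinear auxiliary problem \eqref{eq_nl_changed} (giving $L=-a^{-1}\mathcal L_\phi$, so the two are equivalent), proves $0\in\sigma(L)\Leftrightarrow a'(r)=\nu c'(r)$ by an ad~hoc construction of a kernel element, and for the Morse index invokes the Davidson--Dodds monotonicity theorem for rank-one perturbations rather than a homotopy. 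Your Sherman--Morrison computation is arguably cleaner and yields the identity $\langle\phi_{xx},\mathcal L_0^{-1}\phi_{xx}\rangle=\nu G_j'(\lambda)/(2a^2)$ in one line; the paper's route has the advantage of giving the general non-monotone criterion $a'(r)=\nu c'(r)$ and the two-sided Morse-index statement of Theorem~\ref{theo:Morse_index} without restriction on the sign of $a'$.
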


It is well known (see \cite{C-I74}) that, for each $\nu\in (N^2,\infty)$ the problem \eqref{eq_non-local} with $a\equiv 1$ admits exactly two equilibria $\phi_{N,\nu}^\pm$ that vanish exactly $N-1$ times in $(0,\pi)$ and such that $(\pm 1) (\phi_{N,\nu}^\pm)'(0)>0$.  Consequently, the problem \eqref{eq_non-local} with $a\equiv 1$ admits $2N+1$ equilibria, given by $\{0\}\cup\{\phi_{j,\nu}^+,\phi_{j,\nu}^- : j=1,\dots, N\}$, where
\begin{itemize}
	\item $\phi_{j,\nu}^\pm$ have $j-1$ zeros in $(0,\pi)$, $j=1,\dots, n$.
\end{itemize}

Our main result in this paper is inspired in the work of \cite{C-I74} and aims to show that the nonlocal diffusion brings many new interesting features to the bifurcation problem. It can be stated as follows.

\begin{theorem}\label{theo:Identify_equil_nlA}
 
For each positive integer $j$ there are  two continuous, strictly decreasing functions $c_j^+, c_j^-:\mathbb{R}^+\to (0,\frac{1}{j^2}]$ with $c_j^\pm(r)\stackrel{r\to +\infty}{\longrightarrow} 0$ such that:

For $\nu>0$ and $r>0$, \eqref{eq_non-local} has an equilibrium $\phi$, with $j-1$ zeros in the interval $(0,\pi)$, such that $\phi_x(0)>0$ (resp. $\phi_x(0)<0$) and  $\|\phi_x\|^2=r$ if and only if $\nu c_j^+(r)=a(r)$  (resp. $\nu c_j^-(r)=a(r)$). Furthermore, the equilibrium is hyperbolic if and only if, $a'(r)\ne\nu (c_j^+)'(r)$ (resp. $a'(r)\ne\nu (c_j^-)'(r)$).
\end{theorem}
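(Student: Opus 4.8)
The plan is to exploit the elementary but decisive observation that at an equilibrium the nonlocal coefficient freezes: if $\phi$ solves \eqref{sl:non-local} and we set $r=\|\phi_x\|^2$, then $a(\|\phi_x\|^2)=a(r)$ is a \emph{constant}, so \eqref{sl:non-local} becomes the \emph{local} Chafee--Infante problem
\begin{equation}\label{eq:local-plan}
\phi_{xx}+\lambda f(\phi)=0,\qquad \phi(0)=\phi(\pi)=0,\qquad \lambda:=\frac{\nu}{a(r)}.
\end{equation}
Thus every equilibrium of \eqref{eq_non-local} with $\|\phi_x\|^2=r$ is a solution of \eqref{eq:local-plan} whose derivative-norm happens to equal the prescribed $r$, and conversely. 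The entire statement will follow from an analysis of \eqref{eq:local-plan} subject to this consistency constraint, in the spirit of \cite{C-I74}.

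First I would set up the phase-plane/time-map description of \eqref{eq:local-plan}. From the first integral $\tfrac12\phi_x^2+\lambda F(\phi)=E$, a solution with $j-1$ interior zeros consists of $j$ alternating humps sharing a common energy $E$; their positive and negative amplitudes $\alpha_\pm$ are fixed by $\lambda F(\alpha_\pm)=E$, and the half-hump lengths $T_\pm$ are obtained by the usual quadrature. Requiring the total length to be $\pi$, with the number of $\pm$ humps dictated by $j$ and by the sign of $\phi_x(0)$, selects for each $\lambda$ a unique solution $\phi_{j,\lambda}^\pm$ with $j-1$ zeros and $(\pm1)\phi_x(0)>0$, existing precisely for $\lambda>j^2$. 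The hypotheses \eqref{eq_prop_f}, notably $f''(u)u<0$, are exactly the concavity/convexity conditions making each time map strictly monotone, hence ruling out folds and secondary bifurcations along the branch. Computing $\|\phi_x\|^2$ as a sum of hump contributions then shows that $R_j^\pm(\lambda):=\|(\phi_{j,\lambda}^\pm)_x\|^2$ is a continuous, strictly increasing bijection of $(j^2,\infty)$ onto $(0,\infty)$ with $R_j^\pm(\lambda)\to0$ as $\lambda\downarrow j^2$. I then \emph{define} $c_j^\pm(r):=1/(R_j^\pm)^{-1}(r)$, so that $c_j^\pm$ is continuous, strictly decreasing, maps onto $(0,\tfrac1{j^2}]$ (the value $\tfrac1{j^2}$ being the limit as $r\downarrow0$, where $\lambda\downarrow j^2$), and $c_j^\pm(r)\to0$ as $r\to\infty$. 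The existence assertion is then immediate: an equilibrium with $j-1$ zeros, $\|\phi_x\|^2=r$ and $(\pm1)\phi_x(0)>0$ exists iff the frozen parameter $\lambda=\nu/a(r)$ coincides with the unique value $(R_j^\pm)^{-1}(r)=1/c_j^\pm(r)$ producing a local solution of derivative-norm $r$, i.e.\ iff $\nu c_j^\pm(r)=a(r)$.

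For hyperbolicity I would use the self-adjoint second variation of the Lyapunov functional \eqref{LyapunovFunction}. A direct computation gives, at $\phi$ with $\|\phi_x\|^2=r$,
\[
V''(\phi)(v,w)=a(r)\langle v_x,w_x\rangle+2a'(r)\langle\phi_x,v_x\rangle\langle\phi_x,w_x\rangle-\nu\langle f'(\phi)v,w\rangle,
\]
which is symmetric, and the equilibrium is hyperbolic iff $V''(\phi)$ is nondegenerate (equivalently, $0$ is not in the spectrum of the associated self-adjoint operator). Writing $\mathcal{L}_0:=\partial_{xx}+\lambda f'(\phi)$ with Dirichlet data and $\mu:=\langle\phi_x,v_x\rangle$, a kernel element $v$ satisfies, after integrating by parts and using $\phi_{xx}=-\lambda f(\phi)$,
\[
\mathcal{L}_0 v=\frac{2a'(r)\lambda\mu}{a(r)}\,f(\phi).
\]
Since $\psi:=\partial_\lambda\phi_{j,\lambda}^\pm$ obeys $\mathcal{L}_0\psi=-f(\phi)$, and $\mathcal{L}_0$ is invertible along the branch (this is precisely the local hyperbolicity guaranteed by the monotone time map), one finds $v=-\tfrac{2a'(r)\lambda\mu}{a(r)}\psi$; pairing with $\phi_x$ and using $2\langle\phi_x,\psi_x\rangle=(R_j^\pm)'(\lambda)$ forces, for $v\ne0$,
\[
1=-\frac{a'(r)\lambda}{a(r)}\,(R_j^\pm)'(\lambda).
\]
Differentiating $c_j^\pm(r)=1/(R_j^\pm)^{-1}(r)$ yields $(R_j^\pm)'(\lambda)=-1/(\lambda^2 (c_j^\pm)'(r))$, and substituting $\lambda=\nu/a(r)$ turns the displayed identity into $a'(r)=\nu(c_j^\pm)'(r)$. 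Hence $V''(\phi)$ is degenerate exactly when $a'(r)=\nu(c_j^\pm)'(r)$, so the equilibrium is hyperbolic iff $a'(r)\ne\nu(c_j^\pm)'(r)$.

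The main obstacle I anticipate is the first half of the time-map step: for $f$ not odd the positive and negative humps are genuinely different, so the monotonicity and surjectivity of $R_j^\pm$, together with the invertibility of $\mathcal{L}_0$ along each branch, must be extracted from a careful monotone-time-map argument treating the two humps separately, rather than from the symmetry shortcut available in \cite{CLLM,Estefani2}. Once that local picture is secured, the nonlocal hyperbolicity criterion drops out of the short computation above, which is why it is ``rather simple'' compared with \cite{Estefani2}.
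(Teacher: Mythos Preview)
Your argument is essentially the paper's: you define $c_j^\pm(r)=1/(R_j^\pm)^{-1}(r)$ exactly as in Definition~\ref{def_cjpm}, obtain the existence criterion $\nu c_j^\pm(r)=a(r)$ by the same freezing observation (Theorem~\ref{theo:Identify_equil_nl}), and derive the degeneracy condition by the same device of comparing a putative kernel element with the variation of $\phi_{j,\lambda}^\pm$ along the branch (you use $\partial_\lambda$, the paper uses the equivalent $\partial_r$; the two are related by the chain rule $(R_j^\pm)'(\lambda)=-1/(\lambda^2(c_j^\pm)'(r))$ that you write down). Your Hessian operator associated with $V''(\phi)$ is, up to the positive factor $a(r)$, exactly the negative of the linearization $L$ of the auxiliary semilinear problem~\eqref{eq_nl_changed} used in Theorem~\ref{theo:hyp_semilinear}, so the kernel computations coincide.

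The one point you gloss over is the passage from ``$V''(\phi)$ nondegenerate'' to ``$\phi$ is hyperbolic in the sense of Definition~\ref{SH}''. Because \eqref{eq_non-local} is quasilinear and nonlocal, this implication is not a matter of standard linearized-stability theory; it is precisely where the paper invokes the semilinear reformulation~\eqref{eq_nl_changed} together with the solution-dependent time change and the analysis of \cite{Estefani2}. Your sketch should make that dependence explicit rather than asserting the equivalence outright.
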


\begin{remark}
Note that, Theorem \ref{theo:Identify_equil_nlA} characterizes all equilibria of \eqref{eq_non-local}. Also, it is only required for the function $a:\mathbb{R^+}\to [m,M]\subset (0,+\infty)$ to be continuously differentiable, that is, $a(\cdot)$ is not necessarily increasing.

The study of existence of equilibria requires only the continuity of $a(\cdot)$. The differentiability of $a(\cdot)$ is used to analyze the behavior near the equilibria.
\end{remark}

Assuming only that $f$ and $a$ are continuous, $a(s)\geq m$, for all $s\in \mathbb{R}^+$ and that ${\displaystyle\limsup_{|s|\to +\infty}} \frac{f(s)}{s}=\beta<+\infty$ it has been proved in \cite{CCM-RV21C-Inf} that there exist a solution for \eqref{eq_non-local}, defined for all $t\geq 0$, for each $u_0(\cdot)\in H^1_0(0,\pi)$ and that \eqref{eq_non-local} defines a multivalued semiflow. In addition if $a$ is either non-decreasing or bounded above and $f$ satisfies some growth and dissipativity conditions the authors show that the multivalued semiflow has a global attractor which is characterized as the unstable set of the equilibria. Under some additional assumptions it is also proved in \cite{CCM-RV21C-Inf} that the set of equilibria has  at least $2N+1$ points if $\lambda > a(0)N^2$ and exactly $2N+1$ if a is non-decreasing and $a(0)(N+1)^2\geqslant \lambda > a(0)N^2$. In this paper our focus is on the bifurcation, stability and hyperbolicity of equilibria assuming that $a$ and $f$ are smooth.

In this paper, we pay attention to the case where the function $a(\cdot)$ is not necessarily increasing.  Observe that the diffusion coefficient $a(\|u_x\|^2)$ in \eqref{eq_non-local} depends on the $L^2$ norm of the gradient of the solution.  This means that, roughly speaking, if the function $a(\cdot)$ is increasing, then states with large gradients will have large diffusion coefficient and in some sense, the diffusion mechanism is more efficient in trying to smooth out the solution and definitely in stabilizing the system. Therefore, the dynamics, at least in terms of stability of positive equilibria is expected to be similar to the classical case in which the diffusion does not depend on the state, see \cite{Estefani2}.  On the other hand, if the function $a(\cdot)$ is decreasing for some range of the parameter, it is possible that the systems favors states with large gradients and it may destabilize the system.  This is what actually may occur and, as we will see,  we may have situations in which some not changing sign equilibria  may become unstable, see Theorem \ref{theo:Morse_index} below.

%%%\par\medskip 

This paper is organized as follows. In Section \ref{CI-properties} we study fine properties of the solutions of the Chafee-Infante model \eqref{eq_non-local} with $a\equiv 1$. In Section \ref{Identification} we explain how solutions of \eqref{eq_non-local} can be retrieved from the solutions of \eqref{eq_non-local} with $a\equiv 1$. In Section \ref{hyperbolicity} we give a full characterization of the bifurcations as a function of the parameter $\lambda$ and of the function $a$ and also characterize the exact points where we may loose hyperbolicity of the equilibria. Finally in Section \ref{examples} we show some examples to exhibit the variety of behaviors one may identify for different functions $a$.

	\section{Properties of equilibria for the Chafee-Infante model}\label{CI-properties}
	
	Consider the problem
	\begin{equation}\label{eq:C-I}
		\left\{\begin{aligned}
			& u_t=u_{xx}+\lambda f(u),\ x \in (0,\pi), \  t>0,\\
			& u(0,t)=u(\pi,t)=0, \ t\geq 0, \\
			& u(\cdot,0)=u_0(\cdot)\in H^1_0(0,\pi),
		\end{aligned}\right.
	\end{equation}
where $\lambda >0$ is a parameter, $f \in C^2(\mathbb{R})$ satisfying \eqref{eq_prop_f}.
% {\color{blue} %satisfying \eqref{eq_prop_f}.}
%	 $f(0)=0$, $f'(0)=1$,
%	\begin{equation} \label{eq_prop_fB}
%		f''(u)u < 0, \ \forall\, u \neq 0,\
%		\hbox{ and }\ {\limsup_{|u|\rightarrow +\infty } \frac{f(u)}{u} <0.}
%	\end{equation}
%}

This problem is known as the Chafee-Infante problem and is a very well-studied nonlinear dynamical system. In fact, we can say that it is the best understood example in the literature referring to the characterization of a non-trivial attractor of an infinite dimensional problem. Chafee and Infante started the description of the attractor in \cite{C-I-n2,C-I74} by showing that the problem admits only a finite number of equilibria which bifurcate from zero as the parameter $\lambda>0$ increases. Also, these equilibria are all hyperbolic, with the exception of the zero equilibrium for $\lambda = N^2$, for $N \in \mathbb{N}$.

\begin{remark}
The dissipativity condition \eqref{eq_prop_f} can be relaxed (with very little changes) to include the possibility that inequality is not strict. We chose to keep the analysis as simple as possible.
\end{remark}

	\begin{theorem}\label{theo:C-I_equilibria}
		
For each $\lambda\in (N^2,+\infty)$, $N \in \mathbb{N}$, problem \eqref{eq:C-I} admits exactly two equilibria  $\phi_{N,\lambda}^+$ and $\phi_{N,\lambda}^-$ that vanish exactly $N-1$ times in the interval $(0,\pi)$ and such that  $(\phi_{N,\lambda}^{+})'(0)>0$ and $(\phi_{N,\lambda}^{-})'(0)<0$.  Hence, if $\lambda\in (N^2,(N+1)^2]$, then \eqref{eq:C-I} admits exactly the following $2N+1$ equilibria:  $\{0\}\cup\{\phi_{j,\lambda}^+,\phi_{j,\lambda}^- : j=1,\dots, N\}$.
		
For each $1\leq j\leq N$, the linear operator $L_j^{\lambda,\pm}:H^2(0,\pi)\cap H^1_0(0,\pi)\subset L^2(0,\pi)\to L^2(0,\pi)$ defined by $L_j^{\lambda,\pm}u=u_{xx} + \lambda f'(\phi_{j,\lambda}^\pm)u$, $u\in H^2(0,\pi)\cap H^1_0(0,\pi)$ is a self-adjoint unbounded operator with compact resolvent. All eigenvalues of $L_j^{\lambda,\pm}$ are simple, zero is not an eigenvalue and exactly $j-1$ eigenvalues are positive,  $j=1,\dots, N$.
	\end{theorem}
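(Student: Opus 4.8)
The plan is to reduce the equilibrium problem to the two–point boundary value problem $\varphi'' + \lambda f(\varphi) = 0$ on $(0,\pi)$ with $\varphi(0)=\varphi(\pi)=0$, and to analyze it by the phase–plane/time–map method. Writing the equation as the Hamiltonian system $\varphi'=\psi,\ \psi'=-\lambda f(\varphi)$ with energy $E=\tfrac12\psi^2+\lambda F(\varphi)$, the hypotheses \eqref{eq_prop_f} force the origin to be a center (since $f'(0)=1>0$) and force $F$ to attain a global maximum at the first positive zero $b_+$ of $f$ and at $-b_-$ with $b_-$ the first negative zero, because $\limsup f(u)/u<0$ makes $F\to-\infty$. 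A solution of the initial value problem with $\varphi(0)=0,\ \varphi'(0)=d>0$ is periodic, and I would encode it through the duration $\Theta_+(d)$ of a positive loop and $\Theta_-(d)$ of a negative loop, both at the common energy $E=\tfrac12 d^2$. A solution of the boundary value problem with $\varphi'(0)>0$ and exactly $N-1$ interior zeros consists of $N$ consecutive loops, so its total length is a fixed positive–integer combination $\Sigma_N^+(d)$ of $\Theta_+(d)$ and $\Theta_-(d)$, and the problem is solved precisely when $\Sigma_N^+(d)=\pi$. Linearizing at the origin shows $\Theta_\pm(d)\to\pi/\sqrt\lambda$ as $d\to0^+$, hence $\Sigma_N^+(d)\to N\pi/\sqrt\lambda$, while $\Theta_\pm(d)\to+\infty$ as the orbit approaches the saddle connection, so $\Sigma_N^+(d)\to+\infty$.

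The crux, and the step I expect to be the main obstacle, is the strict monotonicity of the time maps: I would prove that $\Theta_+$ and $\Theta_-$ are strictly increasing in $d$. This is where the sign condition $f''(u)u<0$ enters, since it is exactly equivalent to $u\mapsto f(u)/u$ being strictly decreasing (because $\frac{d}{du}(uf'-f)=f''u<0$ and $uf'-f$ vanishes at $0$), which is the classical sufficient condition for monotonicity of the period/time map. Granting this, each $\Sigma_N^\pm$ is a strictly increasing continuous bijection onto $(N\pi/\sqrt\lambda,+\infty)$, so $\Sigma_N^\pm(d)=\pi$ has a unique root iff $N\pi/\sqrt\lambda<\pi$, i.e.\ $\lambda>N^2$; this yields exactly one equilibrium $\phi_{N,\lambda}^+$ and one $\phi_{N,\lambda}^-$ with $N-1$ interior zeros and the prescribed signs of $\varphi'(0)$. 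Counting over $1\le j\le N$ for $\lambda\in(N^2,(N+1)^2]$, together with $\phi\equiv0$, gives exactly $2N+1$ equilibria.

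For the linear operator $L_j^{\lambda,\pm}u=u_{xx}+\lambda f'(\phi_{j,\lambda}^\pm)u$, self-adjointness and compactness of the resolvent are standard: it is $\partial_{xx}$ (self-adjoint on $H^2\cap H^1_0$ with compact resolvent by Rellich's theorem) plus the bounded self-adjoint multiplication by $\lambda f'(\phi)\in C[0,\pi]$. Simplicity of the eigenvalues is the usual Sturm--Liouville fact, since any eigenfunction vanishing at $0$ is determined up to a scalar by $\psi'(0)$, so each eigenspace is one–dimensional. That $0$ is not an eigenvalue I would deduce from the monotonicity above: differentiating the equilibrium equation shows $\phi'$ solves $L_j^{\lambda,\pm}\phi'=0$, but $\phi'(0)\ne0$, so an eigenfunction for $\mu=0$ would have to be the solution $u_0$ of $u_0''+\lambda f'(\phi)u_0=0$ with $u_0(0)=0,\ u_0'(0)=1$, whence $0$ is an eigenvalue iff $u_0(\pi)=0$. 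Since $u_0=\partial_d\varphi(\cdot;d)|_{d=d_j}$ is exactly the shooting derivative at the root $d_j$ of $\Sigma_j^\pm(d)=\pi$, differentiating the identity $\varphi(\Sigma_j^\pm(d);d)\equiv0$ gives $u_0(\pi)=-\phi'(\pi)\,(\Sigma_j^\pm)'(d_j)$, which is nonzero because $(\Sigma_j^\pm)'(d_j)>0$ and $\phi'(\pi)\ne0$; hence $0\notin\sigma(L_j^{\lambda,\pm})$ and every nonzero equilibrium is hyperbolic.

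Finally, to show exactly $j-1$ eigenvalues are positive I would invoke Sturm oscillation theory: ordering the eigenvalues $\mu_1>\mu_2>\cdots$, the number of positive eigenvalues equals the number of interior zeros of $u_0$. Because $\phi$ has $j-1$ interior zeros, it consists of $j$ monotone loops, so $\phi'$ has exactly $j$ interior zeros and does not vanish at the endpoints; comparing the Pr\"ufer angles of the two independent solutions $u_0$ and $\phi'$ of the same second–order equation (their difference stays in $(0,\pi)$, starting from $0$ and $\pi/2$ respectively) shows that $u_0$ has either $j-1$ or $j$ interior zeros. To decide between the two I would use the sign information already obtained: if $u_0$ has $k$ interior zeros then $\operatorname{sign}u_0(\pi)=(-1)^{k}$, while $u_0(\pi)=-\phi'(\pi)(\Sigma_j^\pm)'(d_j)$ together with $\operatorname{sign}\phi'(\pi)=(-1)^{j}$ (the $j$-th loop ends with $\phi$ returning to $0$) gives $\operatorname{sign}u_0(\pi)=(-1)^{j+1}$; hence $k\equiv j-1\pmod2$, which forces $k=j-1$. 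This completes the eigenvalue count and hence the proof.
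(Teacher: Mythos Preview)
Your proposal is correct and follows essentially the same route as the paper: the time-map/phase-plane analysis for existence and uniqueness (which the paper simply attributes to \cite{C-I74}), and the shooting-derivative argument for hyperbolicity, where the paper parameterizes by the energy $E$ (using $\eta=\phi_x$ and $\psi=\partial_E u$) while you parameterize by the initial slope $d$ (using $\phi'$ and $u_0=\partial_d\varphi$); the two are equivalent via $E=d^2/(2\lambda)$, and both reduce nondegeneracy at $0$ to the nonvanishing of the time-map derivative.

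Where you go beyond the paper is the Morse-index count: the paper does not spell this out at all, simply recording it as part of the statement (with an implicit appeal to \cite{Smoller}). Your Pr\"ufer-angle comparison between $u_0$ and $\phi'$, pinning down the number of interior zeros of $u_0$ to either $j-1$ or $j$ and then using the sign identity $u_0(\pi)=-\phi'(\pi)(\Sigma_j^\pm)'(d_j)$ to settle the parity, is a clean self-contained argument and a genuine addition. One small wording slip: the condition $f''(u)u<0$ does not make $f(u)/u$ globally strictly decreasing (it is increasing for $u<0$ and decreasing for $u>0$, with maximum $f'(0)=1$ at the origin); but this is exactly the classical Opial--Schaaf hypothesis under which the half-period maps $\Theta_\pm$ are strictly increasing, so the conclusion you draw is correct.
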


If $j$ is a positive integer, many properties of the equilibrium $\phi_{j,\lambda}^\pm$ of \eqref{eq:C-I} are proved using the properties of the time maps which we briefly recall for later use. 

%%%%\bigskip

Since, for $\lambda \in (j^2,+\infty)$, $\phi_{j,\lambda}^\pm$ are the solutions of \eqref{eq:C-I} with $j-1$ zeros in the interval $(0,\pi)$ and $i(\phi_{j,\lambda}^i)'(0)>0$, $i\in\{+,-\}$, they are solutions of the initial value problem
\begin{equation}\label{eq:ci-ivp}
\begin{split}
&u_{xx}+\lambda f(u)=0,\ x >0,\\
&u(0)=0, \ u'(0)=v_0,
\end{split}
\end{equation}
where $v_0>0$ is suitably chosen in such a way that $u(\pi)=0$. For a given $v_0$ let $\lambda E=\frac{v_0^2}{2}\in [0,\min\{F(z^+),F(z^-)\}]$, where $F(u)=\int_0^uf(s)ds$, $z^+$ (resp. $z^-$) is the positive (resp. negative) zero of $f$, and note that a solution of \eqref{eq:ci-ivp} must satisfy
$$
\frac{u'(x)^2}{2} +\lambda F(u) = \lambda E.
$$
Let $U^+(E)>0$ and $U^-(E)<0$ be defined as the unique numbers in $[0,z^+]$ and $[z^-,0]$, respectively, with  $F(U^\pm(E))=E$. Then, if 
\begin{equation}\label{time_map}
\tau_\lambda^i(E)=i \left(\frac{2}{\lambda}\right)^\frac12\int_0^{U^i(E)} (E-F(u))^{-\frac12} du, \ i\in\{+,-\},
\end{equation} 
we have, for $j$ odd,
$$
{\mathcal{T}}_\lambda^+(E)=\frac{j+1}{2} \tau_\lambda^+(E) + \frac{j-1}{2} \tau_\lambda^-(E), \quad {\mathcal{T}}_\lambda^-(E)=\frac{j+1}{2} \tau_\lambda^-(E) + \frac{j-1}{2} \tau_\lambda^+(E) 
$$
or, for $j$ even,
$$
{\mathcal{T}}_\lambda^\pm(E)=\frac{j}{2} \tau_\lambda^+(E) + \frac{j}{2} \tau_\lambda^-(E).
$$
The choices of $E$ that gives us the solutions $\phi_{j,\lambda}^+$ are ${\mathcal{T}}_\lambda^+(E_{j,\lambda}^+)=\pi$.

For completeness we give a simple proof that the equilibria of the Chafee-Infante equation \eqref{eq:C-I} are all hyperbolic (see \cite[Section 24F]{Smoller}) with the only exception being the equilibrium $\phi_0\equiv 0$ and exactly when $\lambda=N^2$, $N$ a positive integer. This shows, in particular, that bifurcations only occur from the $\phi_0$.

We prove only the hyperbolicity of $\phi_{j,\lambda}^+$, the other case is similar. We consider the family $u(\cdot,E)$ of solutions of the problem
\begin{equation}\label{edo_aux}
\begin{split}
&u''(x) + \lambda f(u(x))= 0, \\
&u(0,E)=0,\ u'(0,E)=\sqrt{2\lambda E}\ \hbox{ and }\ u(\tau^+_\lambda (E))=0.
\end{split}
\end{equation}
Consequently, $\eta=(\phi_{j,\lambda}^+)_x$ and $\psi=\frac{\partial u}{\partial E} (x,E)\big|_{E=E_{j}^+(\lambda)}$ are solutions of 
\begin{equation}\label{edo4}
v''(x) + \lambda f'(\phi_{j,\lambda}^+) v(x)= 0
\end{equation}
with $\eta(0)\neq 0$, $\eta'(0)=0$ and $\psi(0)=0$, $\psi'(0)=\frac{\sqrt{\lambda}}{\sqrt{2 E_{j}^+(\lambda)}}\neq 0$. This proves that $\eta$ and $\psi$ are linearly independent and any solution of \eqref{edo4} must be of the form
$$
\omega=c_1\eta+c_2\psi, \ \mbox{ for } c_1,c_2 \in \mathbb{R}.
$$
Let us show that if $\omega(0)=\omega({\mathcal{T}}_\lambda^+(E_{j}^+(\lambda)))=0$ then, necessarily, $w\equiv 0$.
In fact, $\psi(0)=0$, $\eta(0)\neq 0$ and $c_1\eta(0)+c_2\psi(0)=0$ implies $c_1=0$. Now, since $u({\mathcal{T}}_\lambda^+(E),E)=0$ for all $E$, we have that $0=\frac{\partial u}{\partial x}({\mathcal{T}}_\lambda^+(E),E)({\mathcal{T}}_\lambda^+(E))'(E)+\frac{\partial u}{\partial E}({\mathcal{T}}_\lambda^+(E),E)$. It is clear that $\frac{\partial u}{\partial x}({\mathcal{T}}_\lambda^+(E),E)\neq 0$ and since that $({\mathcal{T}}_\lambda^+(E))'(E)\neq 0$ (see \cite{C-I74}), we have that $\psi({\mathcal{T}}_\lambda^+(E_{j}^+(\lambda)))=\frac{\partial u}{\partial E}({\mathcal{T}}_\lambda^+(E_{j}^+(\lambda),E_{j}^+(\lambda))\neq 0$. Hence, we also have that $c_2=0$ and the only solution $\omega$ of \eqref{edo4} which satisfies $\omega(0)=\omega(\pi)=0$ is $\omega\equiv 0$. This proves that $0$ is not in the spectrum of the linearization around $\phi$.

%%%%\bigskip	

Now we study the properties of the functions $(j^2,+\infty)\ni \lambda\mapsto \phi_{j,\lambda}^\pm\in H^1_0(0,\pi)$, $j=1,2,3\cdots$.
		
\begin{theorem}\label{prop-aux-func}
For each positive integer $j$, the two functions $(j^2,+\infty)\ni \lambda\mapsto \phi_{j,\lambda}^+\in H^1_0(0,\pi)$, $(j^2,+\infty)\ni \lambda\mapsto \phi_{j,\lambda}^-\in H^1_0(0,\pi)$ are continuously differentiable and consequently the two functions $(j^2,+\infty)\ni \lambda\mapsto \|(\phi_{j,\lambda}^\pm)_x\|^2\in (0,+\infty)$ are strictly increasing, continuously differentiable and $\|(\phi_{j,\lambda}^\pm)_x\|^2\stackrel{\lambda\to+\infty}{\longrightarrow}+\infty$.
\end{theorem}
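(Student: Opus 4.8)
The plan is to establish the three claims in order: the $C^{1}$ dependence $\lambda\mapsto\phi_{j,\lambda}^{\pm}$, the ensuing $C^{1}$ dependence and strict monotonicity of $\lambda\mapsto\|(\phi_{j,\lambda}^{\pm})_{x}\|^{2}$, and its divergence as $\lambda\to+\infty$.

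For the first claim I would apply the implicit function theorem to the $C^{1}$ map $G:(j^{2},+\infty)\times(H^{2}(0,\pi)\cap H^{1}_{0}(0,\pi))\to L^{2}(0,\pi)$, $G(\lambda,u)=u_{xx}+\lambda f(u)$; joint $C^{1}$ regularity follows from $f\in C^{2}$ together with $H^{2}\cap H^{1}_{0}\hookrightarrow C^{1}[0,\pi]$. At a point $(\lambda_{0},\phi_{j,\lambda_{0}}^{\pm})$ one has $D_{u}G(\lambda_{0},\phi_{j,\lambda_{0}}^{\pm})=L_{j}^{\lambda_{0},\pm}$, which by Theorem~\ref{theo:C-I_equilibria} is a self-adjoint operator with $0\notin\sigma(L_{j}^{\lambda_{0},\pm})$ and hence an isomorphism onto $L^{2}(0,\pi)$. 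The theorem then yields a $C^{1}$ local branch $\lambda\mapsto u(\lambda)\in H^{2}\cap H^{1}_{0}$; since it varies continuously in $C^{1}[0,\pi]$, the sign of $u'(0)$ and the number $j-1$ of interior zeros (all simple, as $u$ and $u'$ cannot vanish simultaneously) are preserved, so the uniqueness in Theorem~\ref{theo:C-I_equilibria} forces $u(\lambda)=\phi_{j,\lambda}^{\pm}$. As $\lambda_{0}$ is arbitrary, $\lambda\mapsto\phi_{j,\lambda}^{\pm}$ is $C^{1}$ into $H^{2}\cap H^{1}_{0}$, a fortiori into $H^{1}_{0}(0,\pi)$. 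The $C^{1}$ dependence of $\lambda\mapsto\|(\phi_{j,\lambda}^{\pm})_{x}\|^{2}=\|\phi_{j,\lambda}^{\pm}\|_{H^{1}_{0}}^{2}$ is then immediate, the squared norm being a continuous quadratic form composed with a $C^{1}$ curve.

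For monotonicity I would pass to the time maps of \eqref{time_map}. Along each monotone arch of $\phi_{j,\lambda}^{\pm}$ one has $\phi_{x}^{2}=2\lambda\,(E-F(\phi))$ with $E=E_{j,\lambda}^{\pm}$ constant, and the substitution $u=\phi(x)$ gives, after summing the arches,
\[
\|(\phi_{j,\lambda}^{\pm})_{x}\|^{2}=\tfrac{4}{\pi}\,\mathcal{T}(E)\,\mathcal{G}(E),\qquad \lambda=\tfrac{2}{\pi^{2}}\,\mathcal{T}(E)^{2},
\]
where $\mathcal{T}(E)$ is the $\lambda$-independent weighted sum of the integrals $\int_{0}^{U^{\pm}(E)}(E-F(u))^{-1/2}\,du$ entering \eqref{time_map} (so that $\mathcal{T}_{\lambda}^{+}(E)=(2/\lambda)^{1/2}\mathcal{T}(E)$), and $\mathcal{G}(E)$ is the corresponding weighted sum of $\int_{0}^{U^{\pm}(E)}(E-F(u))^{1/2}\,du$. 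The elementary but decisive identity is
\[
\mathcal{G}'(E)=\tfrac{1}{2}\,\mathcal{T}(E),
\]
the boundary contributions vanishing because $(E-F)^{1/2}=0$ at the turning points $U^{\pm}(E)$. Since $\mathcal{T}_{\lambda}^{+}$ has nonvanishing $E$-derivative by \cite{C-I74} and its sign is pinned down at the bifurcation ($E\to0^{+}$, $\lambda\to j^{2}$), one gets $\mathcal{T}'(E)>0$, whence $\frac{d\lambda}{dE}=\frac{4}{\pi^{2}}\mathcal{T}\mathcal{T}'>0$ and
\[
\frac{d}{dE}\|(\phi_{j,\lambda}^{\pm})_{x}\|^{2}=\tfrac{4}{\pi}\bigl(\mathcal{T}'\mathcal{G}+\tfrac{1}{2}\mathcal{T}^{2}\bigr)>0,
\]
so $\|(\phi_{j,\lambda}^{\pm})_{x}\|^{2}$ is strictly increasing in $\lambda$. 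Finally $\lambda\to+\infty$ forces $\mathcal{T}(E)\to+\infty$, i.e.\ $E$ tends to the separatrix value $\min\{F(z^{+}),F(z^{-})\}$ where the arch widths blow up, while $\mathcal{G}(E)$ remains bounded below by a positive constant; hence $\|(\phi_{j,\lambda}^{\pm})_{x}\|^{2}=\tfrac{4}{\pi}\mathcal{T}\mathcal{G}\to+\infty$.

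I expect the main obstacle to be precisely the strict positivity of the derivative, which reduces entirely to the time-map monotonicity $\mathcal{T}'(E)>0$. This is the classical delicate point, involving differentiation under the integral sign near the singular turning point; I would invoke \cite{C-I74}, the concavity hypothesis $f''(u)u<0$ being what ultimately guarantees it. By comparison, the implicit function theorem step and the identification of the branch via persistence of the nodal pattern are routine, though the latter should be stated explicitly.
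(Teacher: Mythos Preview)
Your proposal is correct and follows essentially the same route as the paper: the $C^{1}$ dependence is obtained from the implicit function theorem via the invertibility of $L_{j}^{\lambda_{0},\pm}$ (the paper phrases this as a contraction fixed point, but it is the same argument), and the monotonicity and divergence of $\lambda\mapsto\|(\phi_{j,\lambda}^{\pm})_{x}\|^{2}$ are derived from the time-map representation, using that $E_{j,\lambda}^{\pm}$ is increasing in $\lambda$ (from $\mathcal{T}'>0$, cf.\ \cite{C-I74}) together with the arch-integral formula $\sqrt{2\lambda}\int_{0}^{U^{\pm}}\sqrt{E-F}$. Your identity $\mathcal{G}'(E)=\tfrac{1}{2}\mathcal{T}(E)$ makes the monotonicity step slightly more self-contained than the paper's version, which in part invokes \cite{CCM-RV21C-Inf}, but the underlying mechanism is identical.
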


	\begin{proof} Consider $i \in \{+,-\}$. To show that $(j^2,+\infty)\ni \lambda\mapsto \phi_{j,\lambda}^i\in H^1_0(0,\pi)$ is continuously differentiable at a point $\lambda_0$ we recall that, for each $\lambda \in (j^2,+\infty)$ we already know that $\phi_{j,\lambda}^i$ is hyperbolic. Hence, to obtain the differentiability at $\lambda=\lambda_0\in (j^2,+\infty)$ we recall that, for $\lambda$ near $\lambda_0$,  $\phi_{j,\lambda}^i=\phi_{j,\lambda_0}^i+v$, where $v$ is the only fixed point of the map
		$$
		T_{j,\lambda}^i v := -\phi_{j,\lambda_0}^i - (L_j^{\lambda_0,i})^{-1} \left( \lambda f(v+\phi_{j,\lambda_0}^i) - \lambda_0 f'(\phi_{j,\lambda_0}^i)v - \lambda_0 f'(\phi_{j,\lambda_0}^i) \phi_{j,\lambda_0}^i\right)
		$$
		in a small neighborhood of zero in $H^1_0(0,\pi)$. Now, since $(j^2,+\infty)\ni \lambda\mapsto T_{j,\lambda}^i\in \mathcal C(H^1_0(0,\pi))$ is continuously differentiable we have that $(j^2,+\infty)\ni \lambda\mapsto \phi_{j,\lambda}^i\in H^1_0(0,\pi)$ is continuously differentiable and the result follows.

The proof that $(j^2,+\infty)\ni \lambda\mapsto \|(\phi_{j,\lambda}^i)_x\|^2\in (0,+\infty)$ is strictly increasing and that $\|(\phi_{j,\lambda}^i)_x\|^2\stackrel{\lambda\to +\infty}{\longrightarrow}+\infty$ follows from the results in \cite[Lemma 5]{CCM-RV21C-Inf} and of the analysis done next.

It has been shown in \cite{C-I74} that the time maps $\tau_\lambda^\pm(\cdot)$, defined in \eqref{time_map}, are strictly increasing functions. Also, for a fixed $E$, clearly $\lambda\mapsto\tau_\lambda^\pm(E)$ is strictly decreasing. Hence, since ${\mathcal{T}}_\lambda^+(E_{j,\lambda}^+)=\pi$, we must have that $i U^i(E_j^i(\lambda))$, $i \in \{+,-\}$, is strictly increasing.

It follows that
$$
g(\lambda):=\int_0^{\tau_\lambda^+(E_j^\pm(\lambda))} ((\phi_{j,\lambda}^\pm)_x)^2dx =  \sqrt{2\lambda}\int_0^{U^+(E_j^\pm(\lambda))} \sqrt{E_j^\pm(\lambda)-F(v)}dv
$$ 
and
$$\int_0^{U^+(E_j^\pm(\lambda))} \sqrt{E_j^\pm(\lambda)-F(v)}dv$$
is an strictly increasing function of $\lambda$. Consequently, $g(\lambda)\stackrel{\lambda\to +\infty}{\longrightarrow} +\infty$ and we must have that $\|(\phi_{j,\lambda}^i)_x\|^2\stackrel{\lambda\to+\infty}{\longrightarrow}+\infty$, completing the proof. 
\end{proof}

Let us consider an alternative simple direct proof of this theorem without using the differentiability results for fixed points. First we show that $(j^2,+\infty)\ni \lambda\mapsto \phi_{j,\lambda}^i\in H^1_0(0,\pi)$ is continuous in $H^1_0(0,\pi)$ $($or $C^1(0,\pi))$. For simplicity of notation we will write $\phi_{\lambda}$ for $\phi_{j,\lambda}^i$.

Let us to show that if $\lambda_n \rightarrow \lambda_0\in (j^2,+\infty)$, we must have that $\|\phi_{\lambda_n}-\phi_{\lambda_0}\|_{H^1_0(0,\pi)}\rightarrow 0$. Since
\begin{equation}\label{equation}
(\phi_{\lambda})_{xx}(r)+\lambda f(\phi_{\lambda})=0
\end{equation}
and the dissipativity condition in \eqref{eq_prop_f} 
we have that there is a constant $M>0$ such that
$$
{\int_0^\pi ((\phi_{\lambda})_x)^2dx=\lambda\int_0^\pi f(\phi_{\lambda})\phi_{\lambda}dx\leq \lambda M.}
$$
	
Therefore, the family $(j^2,+\infty)\ni\lambda\mapsto \phi_{\lambda}\in H^1_0(0,\pi)$ is bounded in bounded subsets of $(j^2,+\infty)$. Since $H^1_0(0,\pi)\hookrightarrow C([0,\pi])$ it is also uniformly bounded in $C([0,\pi])$ uniformly in bounded subsets of $(j^2,+\infty)$. From the continuity of $f$, the same is true for $(j^2,+\infty)\ni\lambda\mapsto f\circ \phi_{\lambda}\in C([0,\pi])$ and, using \eqref{equation}, for $(j^2,+\infty)\ni\lambda\mapsto (\phi_{\lambda})_{xx}\in C([0,\pi])$.
	
	It follows from the compact embedding of $H^2(0,\pi)$ into $H^1_0(0,\pi)$ that there is a subsequence $\{\lambda_{n_k}\}$  of $\{\lambda_n\}_{n\in \mathbb{N}}$ such that $\phi_{\lambda_{n_k}}\stackrel{k\to +\infty}{\longrightarrow} w$ in $H^1_0(0,\pi)$. Now, since
	$$
	\int_0^\pi (\phi_{\lambda_{n_k}})_x v_xdx=\lambda_n\int_0^\pi f(\phi_{\lambda_{n_k}})vdx,
	$$
	for all $v \in H^1_0(0,\pi)$, passing to the limit as $k\to +\infty$ we have that
	$$
	\int_0^\pi w_x v_xdx=\lambda_0\int_0^\pi f(w)vdx,
	$$
	and $w$ is a weak solution of \eqref{equation}. Hence, since $w$ also converges in the $C^1(0,\pi)$ norm, $w\equiv 0$ or $w=\phi_{\lambda_0}$. To see that $w\not\equiv 0$ we recall that $(j^2,+\infty)\ni\lambda\mapsto \int_0^\pi ((\phi_{\lambda})_x)^2$ is an strictly increasing function of $\lambda$. This shows the continuity of the function $(j^2,+\infty)\ni\lambda\mapsto \phi_{\lambda}\in H^1_0(0,\pi)$.

	Let us now prove that $(j^2,+\infty)\ni \lambda\mapsto \phi_\lambda\in H^1_0(0,\pi) \hbox{ or } C^1_0(0,\pi)$ is continuously differentiable. Fix $\lambda\in (j^2,+\infty)$ and consider $\delta>0$ is such that $\lambda+h\in (j^2,+\infty)$ for all $h\in (-\delta,\delta)$. Denote $w(h)=\frac{\phi_{\lambda+h}-\phi_\lambda}{h}$.
	
	Now

$$
	w(h)_{xx}+f(\phi_{\lambda+h})+\lambda\left(\frac{f(\phi_{\lambda+h})-f(\phi_{\lambda})}{h}\right)
$$
$$
	=w(h)_{xx}+{f(\phi_{\lambda+h})}+\lambda f'(\theta \phi_{\lambda+h}+(1-\theta)\phi_{\lambda})w(h)=0
	$$
	
	and
	$$
	\|w(h)_{x}\|^2\leq C\|w(h)\|^2+ C.
	$$
	Hence, proceeding as before we show that $\{w(h): h \in (0,1]\}$ is uniformly bounded in $H^1_0(0,\pi)$ and so it is in $L^2(0,\pi)$ and $C(0,\pi)$.
	%%%%%
	
	Hence, using that $f \in C^2(\mathbb{R})$, $h \in (0,\delta]$, we must have that 
	$$
	\sup_{h\in (0,\delta]}\sup_{y\in [0,\pi]}|w(h)_{xx}(y)|<+\infty.
	$$
	
	Therefore, the sequence $\{w(h): h \in (0,1]\}$ is uniformly bounded in $H^2(0,\pi)$. Hence, we may assume that $w(h)\rightarrow \bar{w}$ in $H^1_0(0,\pi)$ (so as $C^1[0,\pi]$) as $h\rightarrow 0$.
	
	Since, for all $v \in H^1_0(0,\pi)$, $h \in (0,\delta)$, we have
	$$
	\int_0^\pi w(h)_{x}v_x=\int_0^\pi f(\phi_{\lambda+h})v+\lambda\int_0^\pi \lambda f'(\theta \phi_{\lambda+h}+(1-\theta)\phi_{\lambda})w(h)v
	$$
	we find, using the continuity of the function $(j^2,+\infty)\ni \lambda \mapsto \phi_\lambda\in H^1_0(0,\pi)$, that
	\begin{equation}\label{eq:bar_w}
		-\left<\bar{w}_x, v_x\right>+\lambda\left< f'(\phi_\lambda)\bar{w}, v\right>+{\left<f(\phi_\lambda), v\right>}=0,
	\end{equation}
	for all $v \in H^1_0(0,\pi)$. That is $\bar{w}$ is the only solution of
	\begin{equation*}
		\left\{
		\begin{aligned}
			&u_{xx}+\lambda f'(\phi_\lambda)u+{f(\phi_\lambda)}=0\\
			& u(0)=u(\pi)=0.
		\end{aligned}
		\right.
	\end{equation*}
	From this we have the differentiability of the function $(j^2,+\infty)\ni \lambda \mapsto \phi_\lambda\in H^1_0(0,\pi)$.
	
	\begin{remark}
		The same reasoning can be used to show that $(j^2,+\infty)\ni \lambda \mapsto \phi_\lambda\in H^1_0(0,\pi)$ is twice continuously differentiable.
	\end{remark}
	
%%%%	\bigskip
	
	Let us now define an auxiliary function which will allow us to see the equilibria of \eqref{eq:C-I} as equilibria of a nonlocal problem. As we have seen in Theorem \ref{prop-aux-func}, for each positive integer $j$,  the two functions $[j^2,+\infty)\ni \lambda \mapsto \phi_{j,\lambda}^\pm\in H^1_0(0,\pi)$ are continuously differentiable and $[j^2,+\infty)\ni \lambda \mapsto \int_0^\pi((\phi_{j,\lambda}^\pm)_x)^2dx\in (0,+\infty)$ is strictly increasing (see \cite{CCM-RV21C-Inf}) and continuously differentiable. Since $\phi_{j,j^2}^\pm=0$ \cite{C-I74}, we also know that $\int_0^\pi((\phi_{j,\lambda}^\pm)_x)^2dx \rightarrow 0$ as $\lambda\rightarrow j^2$.

\begin{definition}\label{def_cjpm}
	For each positive integer $j$ and $r\geq 0$, let $\lambda_{j,r}^\pm\in [j^2,+\infty)$ be the unique $\lambda$ such that $\int_0^\pi((\phi_{j,\lambda}^\pm\!)_x)^2=r$. Let $c_{j}^\pm:[0,+\infty) \to (0, \frac{1}{j^2}]$ be the function defined by $c_j^\pm(r)= \frac{1}{\lambda_{j,r}^\pm}$, for each $r\geq 0$. 
\end{definition}

Clearly the two functions $c_j^\pm(\cdot)$ are strictly decreasing and continuously differentiable with $\lim_{r\to 0}c_j^\pm(r)={\displaystyle\frac{1}{j^2}}$.  With the aid of this auxiliary function we can rewrite the problem
	\begin{equation}\label{eq_ci}
		\left\{
		\begin{aligned}
			&(\phi_{j,\lambda}^\pm)_{xx}+\lambda f(\phi_{j,\lambda}^\pm)=0,\\
			& \phi_{j,\lambda}^\pm(0)=\phi_{j,\lambda}^\pm(\pi)=0,
		\end{aligned}
		\right.
	\end{equation}
	as the following `nonlocal' problem
	\begin{equation}\label{local_nonlocal}
		\left\{
		\begin{aligned}
			&c_j^i(\|(\phi_{j,\lambda}^\pm)_x\|^2)(\phi_{j,\lambda}^\pm)_{xx}+f(\phi_{j,\lambda}^\pm)=0,\\
			& \phi_{j,\lambda}^\pm(0)=\phi_{j,\lambda}^\pm(\pi)=0.
		\end{aligned}
		\right.
	\end{equation}

For simplicity of notation we will write $c(\cdot)$ to denote one of the two functions $c_{j}^\pm(\cdot)$ and $\phi_\lambda$ for $\phi_{j,\lambda}^\pm$. And observe that, for $r >0$,
	\begin{enumerate}
		\item $\|[\phi_{\lambda_r}]_x\|^2=r$;
		\item $[\phi_{\lambda_r}]_{xx}+\frac{f(\phi_{\lambda_r})}{c(r)}=0$.
	\end{enumerate}
	
	Let $\psi(r)=\phi_{\lambda_r}$. Differentiating with respect to $r$, and representing $\frac{d\psi(r)}{dr}=\dot{\psi}(r)$, we find
	$$
	\dot{\psi}_{xx}(r)+\frac{f'(\psi(r))\dot{\psi}(r)}{c(\|(\psi(r))_x\|^2)}-\frac{f(\psi(r))c'(\|(\psi(r))_x\|^2)}{[c(\|(\psi(r))_x\|^2)]^2}\frac{d}{dr}\|(\psi(r))_x\|^2=0.
	$$

	Now, since
	\begin{equation*}
		\begin{split}
			\frac{d}{dr}\|(\psi(r))_x\|^2&=2\left<(\psi(r))_x,(\dot{\psi}(r))_x\right>=-2\left<(\psi(r))_{xx},\dot{\psi}(r)\right>\\
			&=\frac{2}{c(\|(\psi(r))_x\|^2)}\left<f(\psi(r)),\dot{\psi}(r)\right>
		\end{split}
	\end{equation*}
	we may write
	$$
	\dot{\psi}_{xx}(r)+\frac{f'(\psi(r))\dot{\psi}(r)}{c(\|(\psi(r))_x\|^2)}
	%-\frac{f(\psi(r))c'(\|(\psi(r))_x\|^2)}{[c(\|(\psi(r))_x\|^2)]^2}
	-\frac{2c'(\|(\psi(r))_x\|^2)}{c(\|(\psi(r))_x\|^2)^3}f(\psi(r))\left<f(\psi(r)),\dot{\psi}(r)\right>=0.
	$$
	
	Now, if $L_c:D(L_c)\subset L^2(0,\pi)\to L^2(0,\pi)$ is given by $D(L_c)=H^2(0,\pi)\cap H^1_0(0,\pi)$ and
	$$
	L_cv=v'' +\frac{\lambda f'(\psi(r))}{c(\|(\psi(r))_x\|^2)}v-\frac{2\lambda c'(\|(\psi(r))_x\|^2)}{c(\|(\psi(r))_x\|^2)^3}f(\psi(r))\int_{0}^{\pi} f(\psi(r))v, \ v\in D(L_c),
	$$
	we have that $L_{c}\dot{\psi}(r)=0$ and, since $\dot{\psi}(r) \in H^2(0,\pi)\cap H^1_0(0,\pi)$, it follows that $0 \in \sigma(L_{c})$.

\section{Identifying the equilibria of the nonlocal problem}\label{Identification}

Let us study the sequence of bifurcations for the nonlocal problem \eqref{eq_non-local}. 

\begin{theorem}\label{theo:Identify_equil_nl}

For each positive integer $j$ consider $c_j^+(\cdot)$ and $c_j^-(\cdot)$, the two maps defined above.
For $\nu>0$ and $r>0$, \eqref{eq_non-local} has an equilibrium $\psi$, with $j-1$ zeros in the interval $(0,\pi)$ such that $(\psi)_x(0)>0$ (resp. $(\psi)_x(0)<0$) and  $\|\psi_x\|^2=r$ if and only if $\nu c_j^+(r)=a(r)$  (resp. $\nu c_j^-(r)=a(r)$).
\end{theorem}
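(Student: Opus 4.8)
The plan is to exploit the fact that, along any fixed equilibrium $\psi$ of \eqref{eq_non-local}, the nonlocal diffusion coefficient $a(\|\psi_x\|^2)$ is frozen to a constant, so that $\psi$ in fact solves a genuinely \emph{local} Chafee--Infante equation whose parameter is determined by $r=\|\psi_x\|^2$. Concretely, if $\psi$ is an equilibrium of \eqref{eq_non-local} with $\|\psi_x\|^2=r$, then $a(\|\psi_x\|^2)=a(r)>0$ and \eqref{sl:non-local} may be divided by $a(r)$ to give
\[
\psi_{xx}+\frac{\nu}{a(r)}\,f(\psi)=0,\qquad \psi(0)=\psi(\pi)=0,
\]
which is precisely the stationary Chafee--Infante problem \eqref{eq:C-I} with $\lambda=\nu/a(r)$. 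I would first settle the forward implication from this observation.

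For the forward direction, assume $\psi$ has exactly $j-1$ zeros in $(0,\pi)$ and $\psi_x(0)>0$. Setting $\lambda=\nu/a(r)$, the displayed identity shows that $\psi$ is a Chafee--Infante equilibrium with $j-1$ interior zeros and positive slope at the origin; by the uniqueness clause of Theorem~\ref{theo:C-I_equilibria} (valid once $\lambda>j^2$) we must have $\psi=\phi_{j,\lambda}^+$. In particular $\|(\phi_{j,\lambda}^+)_x\|^2=\|\psi_x\|^2=r$, and by the defining property of $\lambda_{j,r}^+$ in Definition~\ref{def_cjpm}, together with the strict monotonicity of $\lambda\mapsto\|(\phi_{j,\lambda}^+)_x\|^2$ from Theorem~\ref{prop-aux-func}, this forces $\lambda=\lambda_{j,r}^+=1/c_j^+(r)$. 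Hence $\nu/a(r)=1/c_j^+(r)$, i.e. $\nu c_j^+(r)=a(r)$. The case $\psi_x(0)<0$ is identical with $\phi_{j,\lambda}^-$ and $c_j^-$ in place of $\phi_{j,\lambda}^+$ and $c_j^+$.

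Conversely, given $\nu c_j^+(r)=a(r)$, I would reconstruct the equilibrium by reversing the chain. Set $\lambda:=\nu/a(r)=1/c_j^+(r)=\lambda_{j,r}^+$; since $r>0$ we have $\lambda>j^2$, so the Chafee--Infante equilibrium $\phi_{j,\lambda}^+$ exists, has $j-1$ interior zeros, positive slope at $0$, and by construction satisfies $\|(\phi_{j,\lambda}^+)_x\|^2=r$. Multiplying its equation $(\phi_{j,\lambda}^+)_{xx}+\lambda f(\phi_{j,\lambda}^+)=0$ by $a(r)=a(\|(\phi_{j,\lambda}^+)_x\|^2)$ and using $\lambda a(r)=\nu$ recovers exactly \eqref{sl:non-local}, so $\psi:=\phi_{j,\lambda}^+$ is the desired nonlocal equilibrium; the $-$ case is again symmetric.

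The argument is essentially a bookkeeping identity, so I do not anticipate a genuine obstacle; the only point requiring care is the well-posedness of the reconstruction, namely that for $r>0$ the value $\lambda_{j,r}^+$ is a legitimate parameter with $\lambda_{j,r}^+>j^2$ and $c_j^+(r)\in(0,\tfrac{1}{j^2})$. This is exactly what the strict monotonicity and the limit $\|(\phi_{j,\lambda}^\pm)_x\|^2\to 0$ as $\lambda\to j^2$ (Theorem~\ref{prop-aux-func} and the discussion preceding Definition~\ref{def_cjpm}) guarantee, so the map $r\mapsto\lambda_{j,r}^\pm$ is well defined on $(0,+\infty)$ and the equivalence closes.
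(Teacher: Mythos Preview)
Your proof is correct and follows essentially the same approach as the paper: freeze the nonlocal coefficient at the equilibrium to reduce \eqref{sl:non-local} to the local Chafee--Infante problem with parameter $\lambda=\nu/a(r)$, invoke the classification of Theorem~\ref{theo:C-I_equilibria} to identify $\psi$ with $\phi_{j,\lambda}^\pm$, and then use Definition~\ref{def_cjpm} to translate the constraint $\|(\phi_{j,\lambda}^\pm)_x\|^2=r$ into $\nu c_j^\pm(r)=a(r)$. Your write-up is in fact more explicit than the paper's, which carries out the same identification in two terse sentences.
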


\begin{proof} If $\psi$ is an equilibrium of \eqref{eq_non-local}, with $j-1$ zeros in the interval $(0,\pi)$ such that $\psi_x (0)>0$ and  $\|\psi_x\|^2=r$  then $\psi=\phi_{j,\lambda_{j,r}^+}^+$ and $\nu c_j^+(r)=a(r)$. On the other hand,  since $\phi_{j,\lambda_{j,r}^+}^+$ is a solution of \eqref{local_nonlocal} with $\|(\phi_{j,\lambda_{j,r}^+}^+)_x\|^2=r$ and since $\nu c_j^+(r)=a(r)$, $\psi=\phi_{j,\lambda_{j,r}^+}^+$ is an equilibrium of \eqref{eq_non-local}.  A similar argument is used for $c_j^-$. $\ $
\end{proof}

\begin{remark}
For each positive integer $k$,  if $\nu > k^2a(0)$ there are at least $2k+1$ equilibria of the non-local problem \eqref{eq_non-local}. That is an immediate consequence of the fact that the functions $c_j^\pm: [0,+\infty) \to (0, \frac{1}{j^2}]$ are continuous, $\nu c_j^\pm(0) =  \frac{\nu}{j^2}>a(0)$, $c_j^\pm(r)\stackrel{r\to +\infty}{\longrightarrow} 0$, $1\leq j\leq k$, and $a:[0,+\infty)\to [m,M]$ is continuous. In particular, if $a$ is non-decreasing we have exactly $2k+1$ equilibria of \eqref{eq_non-local}.
\end{remark}

\section{The hyperbolicity and Morse Index of equilibria}\label{hyperbolicity}

Consider the auxiliary initial boundary value problem \eqref{eq_nl_changed}
related to \eqref{eq_non-local} by a solution dependent change of the time scale. As we have mentioned before, both problems have exactly the same equilibria and, as in \cite{Estefani2}, the spectral analysis of the self-adjoint operator associated to the linearization of \eqref{eq_nl_changed} around an equilibrium $\psi$ will determine its stability and hyperbolicity properties.

%%%%\bigskip

The linearization of \eqref{eq_nl_changed} around an equilibrium $\psi$ is given by the equation
\begin{equation*}
	v_t = Lv,
\end{equation*}
where $D(L)=H^2(0,\pi)\cap H^1_0(0,\pi)$ and 
$$
Lv=v'' +\frac{\nu f'(\psi)}{a(\|\psi_x\|^2)}v-\frac{2\nu^2 a'(\|\psi_x\|^2)}{a(\|\psi_x\|^2)^3}f(\psi)\int_{0}^{\pi} f(\psi)v, \ v\in D(L).
$$

Given an equilibrium $\psi\neq 0$ of \eqref{eq_nl_changed}, let $r=\|\psi_x\|^2$ and let $k$ be the positive integer such that $\psi$ vanishes $k-1$ times in the interval $(0,\pi)$.  Then if $\psi_x(0)>0$, and if we consider $\lambda_{k,r}^+=(c_k^+(r))^{-1}$ then $\psi=\phi_{k,{\lambda_{k,r}^+}}^+$. Similarly, if $\psi_x(0)<0$, and if we consider $\lambda_{k,r}^-=(c_k^-(r))^{-1}$ then $\psi=\phi_{k,{\lambda_{k,r}^-}}^-$.
For simplicity of notation we will write $c(\cdot)$ instead of $c_k^\pm(\cdot)$, $\lambda_r$ instead of $\lambda_{k,r}^\pm$ and $\phi_{\lambda_r}$ instead of $\phi_{k,\lambda_{k,r}^\pm}^\pm$ for the remainder of this section.

\subsection{Hyperbolicity} 
This section is concerned with the characterization of hyperbolicity for the equilibria of \eqref{eq_nl_changed} given by the theorem below. 

\begin{theorem}\label{theo:hyp_semilinear}
With the notation above, the equilibrium $\psi$ of  \eqref{eq_nl_changed} is not hyperbolic if, and only if, 
	$a'(\|\psi_x\|^2)= \nu c'(\|\psi_x\|^2)$.
\end{theorem}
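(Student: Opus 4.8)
The plan is to view $L$ as a rank-one perturbation of a Chafee--Infante linearization with trivial kernel, so that $0$ can belong to the spectrum for at most one value of the perturbation coefficient, and then to identify that exceptional value with the one produced by the operator $L_c$ constructed in Section~\ref{CI-properties}.

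Set $r=\|\psi_x\|^2$ and write $Pv=\langle f(\psi),v\rangle f(\psi)=f(\psi)\int_0^\pi f(\psi)v$ for the rank-one, bounded, self-adjoint operator, and $L_0v=v''+\frac{\nu f'(\psi)}{a(r)}v$ for the local Sturm--Liouville part, so that $L=L_0-\alpha P$ with $\alpha=\frac{2\nu^2a'(r)}{a(r)^3}$. Since $L_0$ is self-adjoint with compact resolvent and $P$ is a bounded self-adjoint perturbation, $L$ is self-adjoint with compact resolvent; hence $\sigma(L)\subset\mathbb{R}$ is discrete and $\psi$ fails to be hyperbolic precisely when $0\in\sigma(L)$, i.e. when $\ker L\neq\{0\}$. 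By Theorem~\ref{theo:Identify_equil_nl} the equilibrium satisfies $\nu c(r)=a(r)$, so $\frac{\nu}{a(r)}=\frac{1}{c(r)}=\lambda_r$ and $L_0v=v''+\lambda_r f'(\phi_{\lambda_r})v$ is exactly the Chafee--Infante linearization $L_k^{\lambda_r,\pm}$ of Theorem~\ref{theo:C-I_equilibria}; in particular $0\notin\sigma(L_0)$, so $L_0$ is boundedly invertible.

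Next I would solve $(L_0-tP)v=0$ over all $t\in\mathbb{R}$. If $\langle f(\psi),v\rangle=0$ then $L_0v=0$, forcing $v=0$; thus every nontrivial kernel element has $\langle f(\psi),v\rangle\neq0$ and must equal $v=t\langle f(\psi),v\rangle L_0^{-1}f(\psi)$. Pairing with $f(\psi)$ collapses this to the scalar equation $t\langle f(\psi),L_0^{-1}f(\psi)\rangle=1$, which admits a nonzero solution $v$ for exactly one coefficient $t^\ast=\langle f(\psi),L_0^{-1}f(\psi)\rangle^{-1}$, provided $\langle f(\psi),L_0^{-1}f(\psi)\rangle\neq0$. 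To pin down $t^\ast$ I would invoke the Section~\ref{CI-properties} identity $L_c\dot\psi(r)=0$, where $L_c=L_0-\beta P$ shares the same Sturm--Liouville part as $L$ (again by $\frac{\nu}{a(r)}=\frac{1}{c(r)}$) and has coefficient $\beta=\frac{2c'(r)}{c(r)^3}=\frac{2\nu^3c'(r)}{a(r)^3}$. Since $\|\psi(r)_x\|^2\equiv r$ gives $\langle f(\psi),\dot\psi\rangle=\tfrac{c(r)}{2}\neq0$ and $\dot\psi\neq0$, the relation $L_0\dot\psi=\beta\langle f(\psi),\dot\psi\rangle f(\psi)$ forces $\beta\neq0$ (otherwise $0\in\sigma(L_0)$) and $\langle f(\psi),L_0^{-1}f(\psi)\rangle=\beta^{-1}\neq0$. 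Therefore $t^\ast=\beta$, and $0\in\sigma(L)$ holds iff $\alpha=\beta$, that is $\frac{2\nu^2a'(r)}{a(r)^3}=\frac{2\nu^3c'(r)}{a(r)^3}$, equivalently $a'(r)=\nu c'(r)$.

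I expect the main obstacle to be the bookkeeping that genuinely realizes $L$ and $L_c$ as the same rank-one perturbation up to one scalar: checking that the Sturm--Liouville parts coincide through $\nu c(r)=a(r)$ and converting $\frac{2c'(r)}{c(r)^3}$ into $\frac{2\nu^3c'(r)}{a(r)^3}$, together with the verification that $\langle f(\psi),\dot\psi\rangle\neq0$ so that $\dot\psi$ is detected by the scalar criterion rather than lying in the (trivial) kernel of $L_0$.
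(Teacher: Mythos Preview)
Your argument is correct and uses the same ingredients as the paper: the invertibility of the Chafee--Infante linearization $L_0$, the identity $L_c\dot\psi(r)=0$ from Section~\ref{CI-properties}, and the relation $a(r)=\nu c(r)$ to match the Sturm--Liouville parts of $L$ and $L_c$. The only difference is packaging: the paper treats the two implications separately, and for $(\Rightarrow)$ forms the linear combination $w=\beta\nu c'(r)u-\alpha a'(r)\dot\psi$ so that the rank-one term cancels and $L_0w=0$ forces $w=0$, whereas you recast $L=L_0-\alpha P$ and solve the scalar equation $t\langle f(\psi),L_0^{-1}f(\psi)\rangle=1$ directly, which handles both directions at once; your computation $\langle f(\psi),\dot\psi\rangle=c(r)/2$ (hence $\dot\psi\neq0$ and $\beta\neq0$) makes explicit a step the paper argues by contradiction.
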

\begin{proof} $(\Leftarrow)$ Suppose initially that $a'(\|\psi_x\|^2)= \nu c'(\|\psi_x\|^2)$. Let $r= \|\psi_x\|^2$. In the notation above, we have that {$\psi = \phi_{\lambda_r}$}.

 Recall that, as we have seen at the end of Section \ref{CI-properties}, $v=\frac{d}{dr}\phi_{\lambda_r}$ satisfies	
\begin{equation*}
v_{xx}+\frac{f'(\phi_{\lambda_r})}{c(r)}v-\frac{2c'(r)}{c(r)^3}f(\phi_{\lambda_r})\int_0^\pi f(\phi_{\lambda_r})v=0.
\end{equation*}

From Theorem \ref{theo:Identify_equil_nl} we have that $a(r)=\nu c(r)$. Since, $\|\psi_x\|^2=r$, $\psi=\phi_{\lambda_r}$ and $a'(r)=\nu c'(r)$ we have
\begin{equation*}
	v_{xx}+\frac{\nu f'(\psi)}{a(\|\psi_x\|^2)}v - \frac{2\nu^2a'(\|\psi_x\|^2)}{a(\|\psi_x\|^2)^3} f(\psi)\int_0^\pi f(\psi)v=0.
\end{equation*}

Therefore, $0$ is an eigenvalue of $L$, which implies that $\phi$ is not a hyperbolic equilibrium.

%%%%\bigskip 
	
$(\Rightarrow)$ Assume that we find a $0\neq u \in H^2(0,\pi)\cap H^1_0(0,\pi)$ satisfying
$$
u_{xx}+\frac{\nu f'(\psi)}{a(\|\psi_x\|^2)}u-\frac{2\nu^2\alpha a'(\|\psi_x\|^2)}{a(\|\psi_x\|^2)^3}f(\psi)=0	
$$
for $\alpha ={\displaystyle\int_0^\pi} f(\psi(s))u(s)ds$.

Now, since $a(r)=\nu c(r)$ and $\phi_{\lambda_r}\!=\psi$, $v=\frac{d}{dr}\phi_{\lambda_r}$ satisfies
$$
v_{xx}+\frac{\nu f'(\psi)}{a(\|\psi_x\|^2)} v-\frac{2\nu^3\beta c'(\|\psi_x\|^2)}{a(\|\psi_x\|^2)^3}f(\psi)=0,
$$	
for $\beta = {\displaystyle\int_0^\pi} f(\psi(s))v(s)ds$. 

Consequently, $w=\beta \nu c'(r) u-\alpha a'(r) v$ is the solution of
\begin{equation}\label{eq:prob_lin_local}
	\left\{
	\begin{aligned}
		& w_{xx}+\frac{\nu f'(\phi_{\lambda_r})}{a(\|\phi_{\lambda_r}\|^2)}w=0	\hbox{ or } w_{xx}+\lambda_r f'(\phi_{\lambda_r})w=0,	\\
		& w(0)=w(\pi)=0,
	\end{aligned}\right.
\end{equation}
which means $w\equiv 0$. Thus $\beta\nu c'(r) u-\alpha a'(r) \dot{\phi}=0$ and, by multiplying both sides of equality by $f(\phi)$ and integrating from $0$ to $\pi$, we find
$$\alpha\beta\nu c'(r)=\alpha \beta a'(r).$$

Clearly, $\alpha \beta \neq 0$. Otherwise, either $u$ or $v$ should be a solution of \eqref{eq:prob_lin_local}, that is, either $u=0$ or $v=0$, which would be a contradiction.

Therefore, we conclude that $a'(r)=\nu c'(r)$.

\end{proof}

\subsection{Morse Index} Now we analyze what happens to the dimension of the unstable manifolds for the equilibria of \eqref{eq_nl_changed} as they bifurcate.

For $\varepsilon \in \mathbb{R}$, define the operator $L_\varepsilon: H^2(0,\pi)\cap H^1_0(0,\pi)\subset L^2(0,\pi)\to L^2(0,\pi) $ by
\begin{equation*}
	L_\varepsilon u(x) = u'' + p(x)u +\varepsilon q(x)\int_{0}^{\pi} q(s)u(s)ds,
\end{equation*}
where $p,q:[0,\pi]\to \mathbb{R}$ are continuous functions with $q\not\equiv 0$.

When $\varepsilon=0$, $L_0u=u''+p(x)u$ is a 
Sturm-Liouville operator. Hence, $L_0$ is a self-adjoint with compact resolvent and its spectrum consists of a decreasing sequence of simple eigenvalues, that is,
$$
\sigma(L_0)=\{\gamma_j: j=1,2,3\cdots \} 
$$
with, $\gamma_j > \gamma_{j+1}$ and $\gamma_j \longrightarrow -\infty$ as  $j \rightarrow +\infty$.

%%%%\bigskip 

Note that for all $\varepsilon \in \mathbb{R}$, we can decompose $L_\varepsilon$ as sum of two operators:
$$
L_\varepsilon u=L_0 u + \varepsilon Bu,
$$
where $Bu=q(x){\displaystyle\int_0^\pi} q(s)u(s)ds$, for all $u \in H^2(0,\pi)\cap H^1_0(0,\pi)$, is a bounded operator with rank one. It is easy to see that $L_\varepsilon$ is also self-adjoint with compact resolvent. 

%%%%\bigskip

Then, we write $\{ \mu_j(\varepsilon): j=1,2,3,\cdots\}$ to represent the eigenvalues of $L_\varepsilon$, ordered in such a way that, for $j=1,2,3,\cdots$, the function $\mathbb{R}\ni \varepsilon \mapsto \mu_j(\varepsilon)\in \mathbb{R}$ satisfies $\mu_j(0)=\gamma_j$.

Throughout this paper we will use, in an essential way, Theorems 3.4 and 4.5 of \cite{DavidsonDodds}. We summarize these results next.
\begin{theorem}\label{theo:DD}
	Let $L_\varepsilon$ and $\{ \mu_j(\varepsilon): j=1,2,3,\cdots \}$ be as above. The following holds:
	\begin{itemize}
		\item[i)] For all $j=1,2,3,\cdots $, the function  $\mathbb{R}\ni \varepsilon \mapsto \mu_j(\varepsilon)\in \mathbb{R}$ is non-decreasing.
		\item[ii)] If for some $j=1,2,3\cdots $ and $\varepsilon \in \mathbb{R}$, $\mu_j(\varepsilon)\notin \{\gamma_k:k =1,2,3,\cdots \}$, then $\mu_j(\varepsilon)$ is a simple eigenvalue of $L_\varepsilon$. 
	\end{itemize}
\end{theorem}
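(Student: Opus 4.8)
The plan is to use only two structural features of $L_\varepsilon$: the perturbation $\varepsilon B$ is \emph{rank one}, and (for part (i)) the rank-one operator $B$ is nonnegative. Both are transparent from the quadratic form,
\begin{equation*}
\langle L_\varepsilon u,u\rangle=\langle L_0u,u\rangle+\varepsilon\Big(\int_0^\pi q(s)u(s)\,ds\Big)^2,
\end{equation*}
whose perturbative term $(\int_0^\pi qu)^2$ is nonnegative and of rank one.

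For part (i) I would appeal to the Courant--Fischer max--min formula. The operator $L_0=\partial_{xx}+p$ is bounded above, since $-\partial_{xx}\geq 0$ and $p$ is bounded on $[0,\pi]$, and $B$ is bounded and self-adjoint; hence each $L_\varepsilon$ is self-adjoint, bounded above, with compact resolvent, and its eigenvalues listed in decreasing order with multiplicity obey
\begin{equation*}
\mu_j(\varepsilon)=\max_{\substack{V\subset D(L_0)\\\dim V=j}}\ \min_{u\in V\setminus\{0\}}\ \frac{\langle L_\varepsilon u,u\rangle}{\|u\|^2}.
\end{equation*}
The splitting above shows that $\langle L_\varepsilon u,u\rangle$ is pointwise non-decreasing in $\varepsilon$, so every max--min is non-decreasing, giving $\mu_j(\varepsilon_1)\leq\mu_j(\varepsilon_2)$ whenever $\varepsilon_1\leq\varepsilon_2$. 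Since the $\gamma_j$ are simple and strictly decreasing, the $j$-th largest eigenvalue at $\varepsilon=0$ equals $\gamma_j$, so the decreasingly ordered family is exactly the branch normalized by $\mu_j(0)=\gamma_j$, which proves (i).

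For part (ii) I would use the rank-one structure together with the hypothesis $\mu:=\mu_j(\varepsilon)\notin\sigma(L_0)=\{\gamma_k\}$. If $u$ is an eigenfunction, $L_\varepsilon u=\mu u$, put $c:=\int_0^\pi q(s)u(s)\,ds$, so that
\begin{equation*}
(L_0-\mu)u=-\varepsilon c\,q.
\end{equation*}
If $c=0$ then $(L_0-\mu)u=0$ with $u\neq 0$, forcing $\mu\in\sigma(L_0)$, a contradiction; hence $c\neq 0$ and $\varepsilon\neq 0$. Since $\mu\notin\sigma(L_0)$ the resolvent $(L_0-\mu)^{-1}$ exists and $u=-\varepsilon c\,(L_0-\mu)^{-1}q$. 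Thus every eigenfunction associated with $\mu$ is a scalar multiple of the single function $(L_0-\mu)^{-1}q$, the eigenspace is one-dimensional, and self-adjointness of $L_\varepsilon$ upgrades geometric to algebraic simplicity, establishing (ii).

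The computations are short once this structure is isolated, so the difficulties are conceptual rather than computational. In (i) one should confirm that the variationally ordered eigenvalues genuinely coincide with the branches labeled by $\mu_j(0)=\gamma_j$; this is harmless here because the ordered eigenvalues depend continuously on $\varepsilon$ (the operators differ only by the bounded term $\varepsilon B$) and the $\gamma_j$ are simple. The genuinely essential hypothesis is \emph{rank one} in (ii): the argument collapses for higher-rank or sign-indefinite perturbations. Finally, (i) and (ii) interlock, since by (ii) a branch can fail to be simple only at values $\mu_j(\varepsilon)\in\{\gamma_k\}$, so any crossing of the monotone branches is confined to the unperturbed spectrum.
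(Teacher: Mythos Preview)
Your proof is correct. Note, however, that the paper does not give its own proof of this theorem: it simply quotes Theorems~3.4 and~4.5 of Davidson--Dodds and moves on. So what you have written is a self-contained argument for a result the paper treats as a black box. Your approach is the natural elementary one: for (i), monotonicity of the decreasingly ordered eigenvalues follows at once from Courant--Fischer and the observation $\langle Bu,u\rangle=(\int_0^\pi qu)^2\geq 0$, and your remark that the size-ordered labeling agrees at $\varepsilon=0$ with the paper's normalization $\mu_j(0)=\gamma_j$ is exactly the consistency check needed. For (ii), the rank-one resolvent identity $u=-\varepsilon c\,(L_0-\mu)^{-1}q$ is the cleanest route to simplicity off $\sigma(L_0)$. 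The Davidson--Dodds reference develops a finer picture (a secular equation describing the full curves $\varepsilon\mapsto\mu_j(\varepsilon)$ for rank-one perturbations of Sturm--Liouville operators), but for the two statements actually invoked in the paper your direct argument is shorter and entirely adequate.
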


%%%%\bigskip

We wish to determine the Morse Index of the equilibria by looking carefully to the points where the graphs of the functions $a(\cdot)$ and $\nu c(\cdot)$ intercept, that is, depending on how they curves intersect we will be able to determine the Morse Index of the equilibria. Recall that the function $c(\cdot)$ is in fact $c_j^+(\cdot)$ or $c_j^-(\cdot)$ which is associated to an equilibrium $\phi_{j,\lambda_r^+}^+$ or $\phi_{j,\lambda_r^-}^-$ which change sign $j-1$ times in the interval $(0,\pi)$. The intersection of the graphs of $a(\cdot)$ and $\nu c_j^\pm(\cdot)$ necessarily gives rise to an equilibrium that changes sign $j-1$ times for \eqref{eq_non-local}. Hence, as $\nu$ increases, if the first intersection between the graphs of $a(\cdot)$ and $\nu c_j^\pm(\cdot)$ happens with a value of $r\neq 0$ and before the intersection with $r=0$, we must have at least one saddle-node bifurcation that precedes the pitchfork bifurcation from zero  (see, for instance, Example \ref{example:a_1}%Figure \ref{figure-ibgac}
).

This is the main result of this section:

\begin{theorem}\label{theo:Morse_index}
Suppose that $\psi$ is an equilibrium of \eqref{eq_nl_changed} with $k-1$ zeros in $(0,\pi)$ for some positive integer $k$. Let $r=\|\psi_x\|^2$ and $\lambda_r$ such that $\psi=\phi_{k,\lambda_r}^+$ (resp.  $\psi=\phi_{k,\lambda_r}^-$). If we denote $c_k^\pm(\cdot)$ by $c$, then
	\begin{enumerate}[label=$(\roman*)$]
	\item If $a'(\|\psi_x\|^2) > \nu c'(\|\psi_x\|^2)$, then $\psi$ is hyperbolic and its Morse index is $k-1$.
	\item If $a'(\|\psi_x\|^2) < \nu c'(\|\psi_x\|^2)$, then $\psi$ is hyperbolic and its Morse index is $k$.
	\end{enumerate}
\end{theorem}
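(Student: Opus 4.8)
The plan is to view the linearized operator $L$ as a single member $L_{\varepsilon_0}$ of the one-parameter family $L_\varepsilon$ introduced above, and to track how the number of positive eigenvalues changes as $\varepsilon$ crosses the value at which $0$ enters the spectrum. First I would fix $r=\|\psi_x\|^2$ and set $p(x)=\lambda_r f'(\phi_{\lambda_r})$ and $q(x)=f(\phi_{\lambda_r})$, where $\lambda_r=1/c(r)=\nu/a(r)$ by Theorem~\ref{theo:Identify_equil_nl}. With this choice $L=L_{\varepsilon_0}$ for $\varepsilon_0=-2\nu^2 a'(r)/a(r)^3$, and the Morse index of $\psi$ is exactly the number of strictly positive eigenvalues of $L_{\varepsilon_0}$. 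The anchor point is $\varepsilon=0$: there $L_0u=u''+\lambda_r f'(\phi_{\lambda_r})u$ is precisely the Chafee--Infante linearization $L_k^{\lambda_r,\pm}$, so by Theorem~\ref{theo:C-I_equilibria} it has simple eigenvalues $\gamma_1>\cdots>\gamma_{k-1}>0>\gamma_k>\cdots$, i.e. exactly $k-1$ positive eigenvalues and $0\notin\sigma(L_0)$.

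Next I would locate the parameter value $\varepsilon^\ast$ at which $0$ belongs to the spectrum. The computation at the end of Section~\ref{CI-properties} shows that $v=\frac{d}{dr}\phi_{\lambda_r}\neq 0$ solves $L_{\varepsilon^\ast}v=0$ with $\varepsilon^\ast=-2c'(r)/c(r)^3$, so $0\in\sigma(L_{\varepsilon^\ast})$; since $0\notin\{\gamma_j\}$, Theorem~\ref{theo:DD}(ii) forces this zero eigenvalue to be simple. To see that $\varepsilon^\ast$ is the \emph{only} such value, I would use the resolvent of $L_0$: if $L_\varepsilon u=0$ with $u\neq0$ and $\varepsilon\neq0$, then $\langle q,u\rangle\neq 0$ (otherwise $L_0u=0$, impossible), and $u=-\varepsilon\langle q,u\rangle L_0^{-1}q$ forces $1=-\varepsilon\langle q,L_0^{-1}q\rangle$, which pins $\varepsilon$ down uniquely. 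In particular $\varepsilon^\ast\neq 0$ (because $0\notin\sigma(L_0)$), hence $c'(r)\neq 0$; as $c$ is decreasing this gives $c'(r)<0$, and therefore $\varepsilon^\ast>0$.

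I would then run a monotonicity/counting argument. By Theorem~\ref{theo:DD}(i) each $\mu_j(\varepsilon)$ is non-decreasing and continuous, so $M(\varepsilon):=\#\{j:\mu_j(\varepsilon)>0\}$ is non-decreasing and can only change where $0\in\sigma(L_\varepsilon)$, i.e. only at $\varepsilon^\ast$. Thus $M$ is constant on $(-\infty,\varepsilon^\ast)$ and on $(\varepsilon^\ast,+\infty)$; since $0<\varepsilon^\ast$ and $M(0)=k-1$, we get $M\equiv k-1$ on $(-\infty,\varepsilon^\ast)$. At $\varepsilon^\ast$ exactly one branch $\mu_{j_0}$ vanishes; being non-decreasing and nonzero off $\varepsilon^\ast$, it is $<0$ just below and $>0$ just above, so $M$ jumps by exactly one and $M\equiv k$ on $(\varepsilon^\ast,+\infty)$. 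Finally I would translate the hypotheses: since $\varepsilon_0-\varepsilon^\ast=-\frac{2}{\nu c(r)^3}\big(a'(r)-\nu c'(r)\big)$ and $\nu c(r)^3>0$, the case $a'(r)>\nu c'(r)$ is exactly $\varepsilon_0<\varepsilon^\ast$, giving Morse index $k-1$, while $a'(r)<\nu c'(r)$ is $\varepsilon_0>\varepsilon^\ast$, giving Morse index $k$. In both cases $a'(r)\neq\nu c'(r)$, so hyperbolicity follows at once from Theorem~\ref{theo:hyp_semilinear}.

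The main obstacle, and the step I would be most careful about, is the passage from ``$0$ is a simple eigenvalue at $\varepsilon^\ast$ with monotone branches'' to ``the Morse index jumps by exactly one there'': monotonicity alone (Theorem~\ref{theo:DD}(i)) does not rule out an eigenvalue resting at $0$ on a whole subinterval of $\varepsilon$-values, which would invalidate the count. This is precisely why the uniqueness of $\varepsilon^\ast$ via the $L_0^{-1}$ resolvent identity is essential: it guarantees that the relevant branch strictly leaves $0$ on both sides, so the positive-eigenvalue count increases by the multiplicity of the zero eigenvalue, which is one.
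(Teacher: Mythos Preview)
Your proposal is correct and follows essentially the same strategy as the paper: embed the linearization in the one-parameter family $L_\varepsilon$, use $L_0$ (the Chafee--Infante linearization) as anchor with exactly $k-1$ positive eigenvalues, locate the unique $\varepsilon^\ast>0$ at which $0\in\sigma(L_\varepsilon)$, and invoke the monotonicity from Theorem~\ref{theo:DD} to conclude that the positive-eigenvalue count is $k-1$ below $\varepsilon^\ast$ and $k$ above. The only notable variation is your uniqueness argument for $\varepsilon^\ast$ via the resolvent identity $1=-\varepsilon\langle q,L_0^{-1}q\rangle$, whereas the paper obtains it by replaying the linear-combination trick from the proof of Theorem~\ref{theo:hyp_semilinear}; both are short and valid, and the paper additionally pins down that the crossing branch is precisely $\mu_k$, which you do not need for the count.
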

\begin{proof}

The hyperbolicity follows from Theorem \ref{theo:hyp_semilinear}. Define the operator
$$
L_\varepsilon v=v'' +\frac{\nu f'(\psi)}{a(\|\psi'\|^2)}v+\varepsilon f(\psi)\int_{0}^{\pi} f(\psi)v,
$$
$v\in D(L_\varepsilon)=H^2(0,\pi)\cap H^1_0(0,\pi)$, for each $\varepsilon >0$.

%%%\bigskip

Note that $L_0$ is the linearization of \eqref{eq:C-I} at $\psi$ for the parameter $\nu_0=\frac{\nu}{a(\|\psi_x\|^2)}$.
The spectrum of $L_0$ is given by an unbounded ordered sequence $\{\mu_j(0)\}_{j \in \mathbb{N}}$ of simple eigenvalues, that is, 
$$
\mu_1(0) >\mu_2(0) > \dots > \mu_{k-1}(0)> \mu_k(0) > \mu_{k+1}(0)> \dots
$$
Since $0\notin \sigma(L_0)$ we may have that $0> \mu_j(0)$, for all $j=1,2,3\cdots$, or there is a positive integer 
$k$ such that $\mu_{k-1}(0)> 0> \mu_k(0)$.

For $\tilde{\varepsilon}=-\frac{2 c'(\|\psi_x\|^2)}{c(\|\psi_x\|^2)^3}=-\frac{2 \nu^3 c'(\|\psi_x\|^2)}{a(\|\psi_x\|^2)^3}$, we have $0\in \sigma(L_{\tilde{\varepsilon}})$ and $0$ is a simple eigenvalue, by Theorem \ref{theo:DD}. Using the same reasoning applied in the proof of the second part of Theorem \ref{theo:hyp_semilinear} we can show that if $0 \in \sigma(L_\varepsilon)$, then $\varepsilon =\tilde{\varepsilon}$.

Using Theorem \ref{theo:DD}, part $i)$, we deduce that $\mu_j(\varepsilon)>0$, $j=1,\cdots, k-1$, for all $\varepsilon\geq 0$. Since $0\in \sigma(L_\varepsilon)$ if and only if $\varepsilon=\tilde{\varepsilon}>0$ we must have that $\mu_j(\varepsilon)>0$,  $j=1,\cdots,k-1$, for all $\varepsilon<0$. That means at least $k-1$ eigenvalues are positive, for all $\varepsilon \in \mathbb{R}$.

Also, since $\mu_k(0)>\mu_j(0)$, for all $j>k$, 
$\mu_k(\cdot)$ is increasing and $L_0$ does not have an eigenvalue in the interval $(\mu_k(0),0]$ we have that $\mu_k(\tilde{\varepsilon})=0$. Otherwise $\mu_j(\varepsilon)=\mu_k(\varepsilon)\in (\mu_k(0),0]$ for some $j>k$ and $\varepsilon\in (0,\tilde{\varepsilon}]$ which is not possible by Theorem \ref{theo:DD}, part $ii)$. Since $0\notin \sigma(L_0)$, $\mu_j(\varepsilon)<0$ for all $\varepsilon\in \mathbb{R}$ and $j>k$.

As a consequence, the number of positive eigenvalues of $L_\varepsilon$ is $k-1$ if $\varepsilon<\tilde{\varepsilon}$ and $k$ if $\varepsilon>\tilde{\varepsilon}$ (see Figure 1).

\begin{figure}
\includegraphics[scale=0.6]{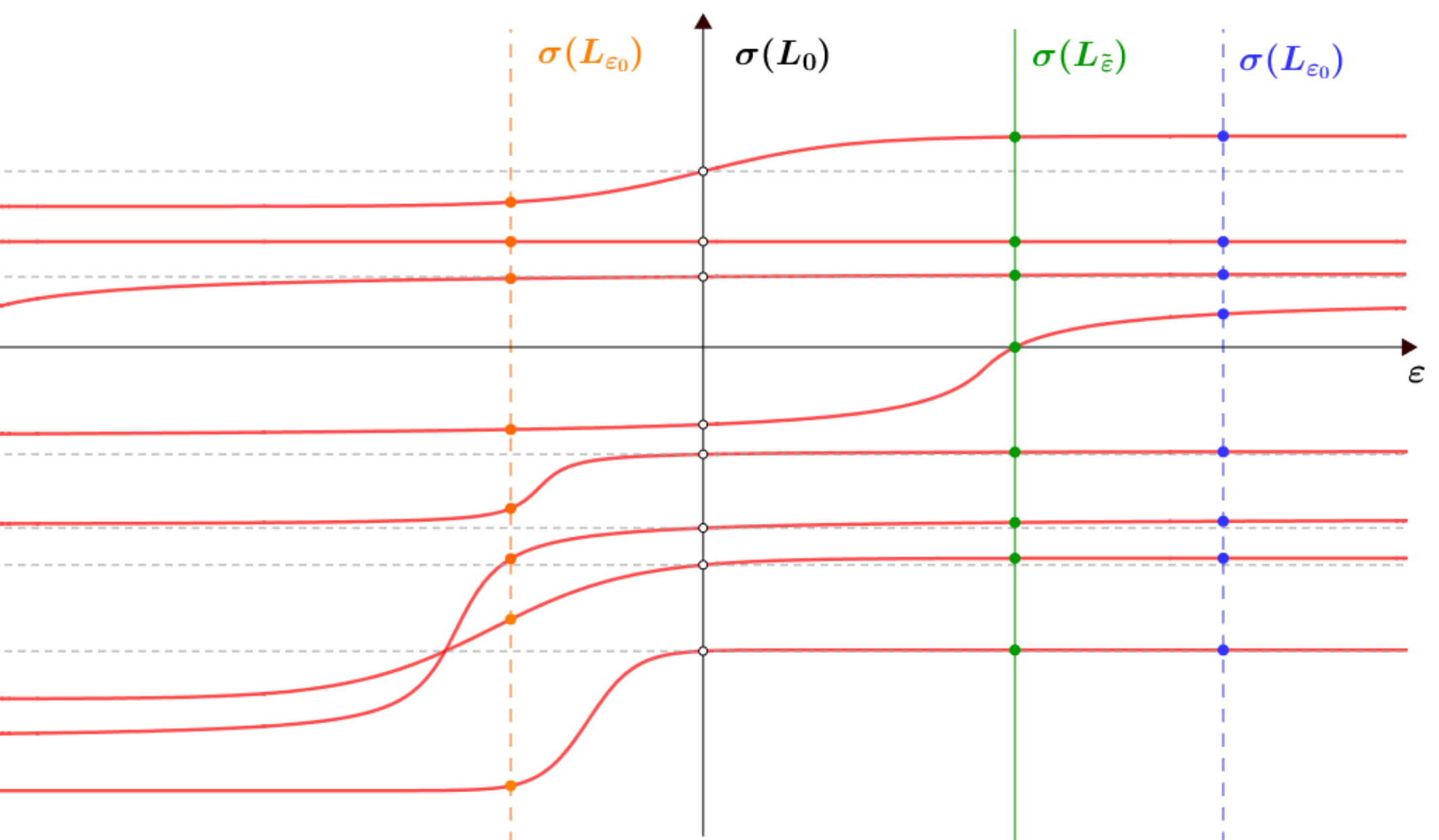}
\caption{Spectrum of $L_\epsilon$}
\end{figure}

Let $\varepsilon_0=-\frac{2\nu^2 a'(\|\psi_x\|^2)}{a(\|\psi_x\|^2)^3}$. Then, if $a'(\|\psi_x\|^2) > \nu c'(\|\psi_x\|^2)$, we have that $\varepsilon_0<\tilde{\varepsilon}$ and $L_{\varepsilon_0}$ has  exactly $k-1$ positive eigenvalues and $0\notin\sigma(L_{\varepsilon_0}$). On the other hand, if
$a'(\|\psi_x\|^2) < \nu c'(\|\psi_x\|^2)$, we have that $\varepsilon_0>\tilde{\varepsilon}$ and $L_{\varepsilon_0}$ has  exactly $k$ positive eigenvalues and $0\notin\sigma(L_{\varepsilon_0})$.  
\end{proof}

\section{Analyzing the attractor for a few examples}\label{examples}

Denote by $c^L_{j,\pm}(\cdot)$, $L>0$, $j \in \mathbb{N}$, the function $c_j^\pm(\cdot)$ related to the equilibria that have $j-1$ zeros in $(0,\pi)$ of the problem
\begin{equation}\label{eq:CI_intL}
\left\{
\begin{aligned}
&u_{xx}+ \lambda f(u)=0, x \in (0,L),\\
&u(0)=u(L)=0,
\end{aligned}
\right.
\end{equation}
for $\lambda >0$ a parameter.

Recall that the following holds.
\begin{lemma}\label{lemma:symm_even_equil}
Consider $f \in C^2(\mathbb{R})$ satisfying \eqref{eq_prop_f}. If $j \in \mathbb{N}$, $j\geq 2$,  and $\phi_{j}$ is an equilibrium of \eqref{eq:C-I} with $j-1$ zeros in $(0,\pi)$, then $\phi_{2j}$ is $\frac{\pi}{j}$ periodic. In addition, if $f$ is odd then $\phi_j(\frac{\pi}{j}+x)=-\phi_j(\frac{\pi}{j}-x)$, for $x \in [0,\frac{\pi}{j}]$.
\end{lemma}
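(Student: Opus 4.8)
The plan is to exploit that each equilibrium of \eqref{eq:C-I} is the restriction to $[0,\pi]$ of a global periodic solution of the autonomous equation $u''+\lambda f(u)=0$, together with uniqueness of solutions of initial value problems for this equation (guaranteed since $f\in C^2$ is locally Lipschitz). Let $\phi_{2j}$ be an equilibrium with $2j-1$ zeros in $(0,\pi)$ (either sign of the derivative at $0$ is allowed) and let $0=x_0<x_1<\dots<x_{2j}=\pi$ be its zeros in $[0,\pi]$. Along $\phi_{2j}$ the energy $\tfrac12(\phi_{2j}')^2+\lambda F(\phi_{2j})\equiv\lambda E$ is constant, and since $F(0)=0$ we get $|\phi_{2j}'(x_i)|=\sqrt{2\lambda E}$ at every zero $x_i$. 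Moreover $\phi_{2j}'$ cannot vanish at a zero (that would force $\phi_{2j}\equiv 0$ by uniqueness), and its sign alternates from one zero to the next; in particular $\phi_{2j}'(x_2)=\phi_{2j}'(x_0)$ and $\phi_{2j}(x_2)=\phi_{2j}(x_0)=0$.

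For the periodicity I would compare $\phi_{2j}(\cdot)$ with $x\mapsto\phi_{2j}(x+x_2)$: after extending $\phi_{2j}$ to a global solution, both solve $u''+\lambda f(u)=0$ with the same data at $x=0$, so uniqueness gives $\phi_{2j}(x+x_2)=\phi_{2j}(x)$ for all $x$. Thus $\phi_{2j}$ is periodic with period $x_2$, the combined length $\tau_\lambda^+(E)+\tau_\lambda^-(E)$ of two consecutive arches in the notation of \eqref{time_map}. Periodicity then forces $x_{2k}=k\,x_2$ for every $k$; since $x_{2j}=\pi$ this yields $x_2=\pi/j$, i.e. $\phi_{2j}$ is $\tfrac{\pi}{j}$-periodic. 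Equivalently, the even-index time-map identity ${\mathcal{T}}_\lambda^\pm(E)=j\bigl(\tau_\lambda^+(E)+\tau_\lambda^-(E)\bigr)=\pi$ gives $\tau_\lambda^+(E)+\tau_\lambda^-(E)=\pi/j$ directly. Note that this part uses no symmetry of $f$.

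For the second assertion I would use that, when $f$ is odd, $F$ is even, which makes $u\mapsto -u$ a symmetry of the equation and forces $\tau_\lambda^+(E)=\tau_\lambda^-(E)$; hence the $j$ arches of $\phi_j$ all have equal length, so its first zero is $x_1=\pi/j$. Setting $g(x):=-\phi_j(2x_1-x)$, a direct computation using oddness gives $g''(x)=-\phi_j''(2x_1-x)=\lambda f(\phi_j(2x_1-x))=-\lambda f(g(x))$, so $g''+\lambda f(g)=0$, while $g(x_1)=0=\phi_j(x_1)$ and $g'(x_1)=\phi_j'(x_1)$. By uniqueness at $x_1$ we conclude $g\equiv\phi_j$, that is $\phi_j(2x_1-x)=-\phi_j(x)$; writing $x=x_1+y$ and recalling $x_1=\pi/j$ gives exactly $\phi_j(\tfrac{\pi}{j}+y)=-\phi_j(\tfrac{\pi}{j}-y)$ for $y\in[0,\tfrac{\pi}{j}]$.

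The argument is essentially a reading of the phase portrait, so I do not expect a serious analytic obstacle. The steps that require care are \emph{(i)} establishing that $|\phi_{2j}'|$ is constant on the zero set and that the signs alternate, both of which rest on the conservation law and on uniqueness for the ODE, and \emph{(ii)} correctly identifying the spatial period as the length of two consecutive arches and evaluating it as $\pi/j$ (through $x_{2j}=\pi$ or, equivalently, the even-index time map). The only genuine use of oddness is to guarantee $\tau_\lambda^+=\tau_\lambda^-$, which pins the first zero of $\phi_j$ at $\pi/j$ and turns $u\mapsto-u$ into a symmetry of the equation.
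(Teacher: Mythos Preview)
Your argument is correct. The paper does not actually prove Lemma~\ref{lemma:symm_even_equil}; it introduces it with ``Recall that the following holds'' and then moves on, treating the result as standard. Your phase-plane argument---energy conservation to show $|\phi_{2j}'|$ is constant on the zero set with alternating sign, uniqueness for the ODE to deduce periodicity with period $x_2$, and then $x_{2j}=\pi$ to identify $x_2=\pi/j$---is precisely the classical justification the authors are tacitly invoking, and it is consistent with the time-map formalism they set up in \eqref{time_map}. The oddness part is handled the same way one would expect: $\tau_\lambda^+=\tau_\lambda^-$ pins $x_1=\pi/j$, and the reflection $g(x)=-\phi_j(2x_1-x)$ combined with uniqueness gives the antisymmetry. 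Nothing is missing.
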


\begin{proposition}\label{prop:rel_c_j} If $f \in C^2(\mathbb{R})$ and satisfies \eqref{eq_prop_f}, then:
 \begin{enumerate}[label=$(\roman*)$]
  \item $c_{j,\pm}^L(r)=\left(\frac{L}{\pi}\right)^2c_{j,\pm}^\pi(\frac{Lr}{\pi})$, for all $r\in \mathbb{R}^+$, $j=1,2,3\cdots $. 
\item For all $r\in \mathbb{R}^+$, $c_{2j,\pm}^\pi(r) = \frac{1}{j^2} c_{2,\pm}^\pi(\frac{r}{j^2})$.

\hspace{-45pt}If we also assume that $f$ is odd, then:
  \item $c^L_{j,+}(\cdot)=c^L_{j,-}(\cdot)$ and $c_{j,\pm}^\pi(r)=c_{1,\pm}^\frac{\pi}{j}(\frac{r}{j})$, for all $r\in \mathbb{R}^+$ and $j \in \mathbb{N}$.
  \item $c^L_{j,+}(\cdot)=c^L_{j,-}(\cdot)$ and $c_{j,\pm}^\pi(r)=\frac{1}{j^2}c_{1,\pm}^\pi(\frac{r}{j^2})$, for all $r \in \mathbb{R}^+$ and $j \in \mathbb{N}$.
 \end{enumerate}
\end{proposition}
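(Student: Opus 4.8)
The plan is to exploit scaling and symmetry properties of the Chafee-Infante equation \eqref{eq:CI_intL} and translate them into the corresponding statements about the functions $c_{j,\pm}^L$. Throughout, recall from Definition \ref{def_cjpm} that $c_{j,\pm}^L(r) = 1/\lambda$, where $\lambda$ is the parameter value for which the equilibrium of \eqref{eq:CI_intL} with $j-1$ zeros in $(0,L)$ and prescribed sign of the derivative at the origin has $\|(\phi_{j,\lambda}^\pm)_x\|_{L^2(0,L)}^2 = r$. The core observation is that each part corresponds to a change of variables mapping solutions on one interval (or with one nodal structure) to solutions on another, and the only bookkeeping is to track how the parameter $\lambda$ and the Dirichlet energy $\|u_x\|^2$ transform.

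For part $(i)$, I would start from an equilibrium $u$ of \eqref{eq:CI_intL} on $(0,L)$ at parameter $\lambda$ and rescale the spatial variable via $x = (L/\pi) y$, setting $v(y) = u((L/\pi)y)$ so that $v$ solves the equation on $(0,\pi)$. A direct computation shows $v_{yy} = (L/\pi)^2 u_{xx} = -(L/\pi)^2 \lambda f(v)$, so $v$ is an equilibrium of \eqref{eq:C-I} at parameter $\tilde\lambda = (L/\pi)^2 \lambda$, with the same number of interior zeros and the same sign of the initial derivative. Then I would compute how the energies relate: $\|u_x\|_{L^2(0,L)}^2 = \int_0^L u_x^2\,dx$, and substituting $x = (L/\pi)y$ together with $u_x = (\pi/L) v_y$ gives $\|u_x\|_{L^2(0,L)}^2 = (\pi/L)\|v_y\|_{L^2(0,\pi)}^2$. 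Inverting these relations and writing everything in terms of $c = 1/\lambda$ yields the stated identity $c_{j,\pm}^L(r) = (L/\pi)^2 c_{j,\pm}^\pi(Lr/\pi)$; the main care is to verify the argument $Lr/\pi$ tracks correctly through the energy rescaling.

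For part $(ii)$, I would use Lemma \ref{lemma:symm_even_equil}: the equilibrium $\phi_{2j}$ with $2j-1$ zeros is $\pi/j$-periodic, so the equilibrium on $(0,\pi)$ with $2j-1$ interior zeros is built from $j$ copies of the equilibrium of \eqref{eq:CI_intL} on an interval of length $\pi/j$ with exactly one interior zero (i.e. the $c_{2,\cdot}^{\pi/j}$ profile). Combining this periodic-tiling observation with part $(i)$ applied with $L = \pi/j$ should produce the claimed relation, after accounting for the fact that tiling $j$ copies multiplies the total Dirichlet energy by $j$. Parts $(iii)$ and $(iv)$ follow the same pattern under the additional oddness of $f$: oddness forces the $+$ and $-$ equilibria to be reflections of one another (hence $c_{j,+}^L = c_{j,-}^L$), and the half-period antisymmetry in Lemma \ref{lemma:symm_even_equil} lets one express an equilibrium with $j-1$ zeros as a tiling of $j$ copies of the single-hump profile, giving $(iii)$ directly and $(iv)$ by combining $(iii)$ with $(i)$.

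The main obstacle I anticipate is not any single computation but rather the careful bookkeeping of the energy normalization under tiling in parts $(ii)$--$(iv)$: one must confirm that concatenating $j$ scaled copies of a profile multiplies $\|u_x\|^2$ by exactly the factor that makes the argument of $c_{1,\pm}$ or $c_{2,\pm}$ come out as $r/j^2$ (or $r/j$), and that the sign conventions on the derivative at the origin are preserved across reflections when $f$ is odd. Once the scaling map and the periodic structure from Lemma \ref{lemma:symm_even_equil} are pinned down, each identity reduces to substituting into the definition of $c_{j,\pm}^L$ and simplifying.
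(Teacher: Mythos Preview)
Your plan is correct and follows essentially the same route as the paper: part $(i)$ is the spatial rescaling $x\mapsto (L/\pi)y$ with the bookkeeping on $\lambda$ and the Dirichlet energy exactly as you describe; part $(ii)$ uses the $\pi/j$-periodicity from Lemma \ref{lemma:symm_even_equil} to reduce to a single period (yielding $c_{2j}^\pi(r)=c_{2}^{\pi/j}(r/j)$) and then applies $(i)$; parts $(iii)$ and $(iv)$ use the odd-$f$ antisymmetry to tile with single-hump profiles and then combine with $(i)$. The only detail to execute carefully is the one you flagged, namely that restricting to one period of length $\pi/j$ divides the energy by $j$, and the subsequent application of $(i)$ with $L=\pi/j$ contributes the extra factor that turns $r/j$ into $r/j^2$.
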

\begin{proof} The proof follows by a simple change of variables. 

\begin{enumerate}[label=$(\roman*)$] \item Let $r\in \mathbb{R}^+$. In what follows we fix one of the symbols $+$ or $-$ and omit it in the notation. If $c_j^L(r) = \frac{1}{\lambda_r}$, then there is a $\phi \in C^2(0,L)$, with $\|\phi_x\|^2=r$, such that $\phi\neq 0$ in $(0,\pi)$ and satisfies \eqref{eq:CI_intL} with $\lambda$ replaced by $\lambda_r$.

For $x \in [0,\pi]$, define $\psi(x)=\phi(\frac{Lx}{\pi})$. Then $\psi$ satisfies 
$$
\psi_{xx}(s)=\left(\frac{L}{\pi}\right)^2\phi_{xx}\left(\frac{Ls}{\pi}\right)=-\left(\frac{L}{\pi}\right)^2\lambda_rf\left(\phi\left(\frac{Ls}{\pi}\right)\right).
$$

In other words, $\psi$ is a solution of \eqref{eq:CI_intL} with $L$ replaced by $\pi$ and $\lambda$ replaced by $\left(\frac{L}{\pi}\right)^2\lambda_r$.
Also, 
$$
\|\psi_x\|^2 = \int_0^\pi (\psi_x(s))^2ds=\int_0^\pi \left(\frac{L}{\pi}\right)^2 \left(\phi_x\left(\frac{Ls}{\pi}\right)\right)^2ds = \frac{L}{\pi}\int_0^L \phi_x(u)^2du=\frac{Lr}{\pi}.
$$

Hence, by definition of $c_j^\pi$, we conclude that $c_j^\pi(\frac{Lr}{\pi})=(\frac{\pi}{L})^2\frac{1}{\lambda_r}$.

Therefore, $c_j^L(r)=\frac{1}{\lambda_r}=\left(\frac{L}{\pi}\right)^2 c_j^\pi(\frac{Lr}{\pi})$. Since $r\in \mathbb{R}^+$ is arbitrary, the result follows.

\item	Once again, we fix one of the symbols $+$ or $-$ and omit it in the notation. Let $r> 0$ and $j \in \mathbb{N}$, $j\geq 2$. 
By the definition, $c_{2j}^\pi(r)=\frac{1}{\lambda_r}$ implies that there is a $\phi$, with $2j-1$ zeros in $(0,\pi)$, an equilibrium of \eqref{eq:C-I} when $\lambda =\lambda_r$ and satisfying $\|\phi_x\|^2=r$.

By Lemma \ref{lemma:symm_even_equil}, we have that 
$r=\int_0^\pi (\phi_x(s))^2 ds = j \int_0^\frac{\pi}{j} (\phi_x(s))^2 ds$.

Hence $\psi = \phi\big|_{[0,\frac{\pi}{j}]}$ is the solution of \eqref{eq:CI_intL} that changes sing one time for $L=\frac{\pi}{j}$ and $\|\psi_x\|_{L^2(0,\frac{\pi}{2})}=\frac{r}{j}$. 

Therefore, $c_2^\frac{\pi}{j}(\frac{r}{j})=\frac{1}{\lambda_r}=c_{2j}^{\pi}(r)$. By the previous item, the desired result follows.

\item Fix $j \in \mathbb{N}$ and $r \in \mathbb{R}^+$. If $c_j^\pi(r)=\frac{1}{\lambda_r}$, then there is $\phi \in C^2(0,\pi)$ with $j-1$ zeros in $(0,\pi)$, with $\|\phi_x\|^2=r$, and satisfying \eqref{eq:C-I}.  Since $f$ is odd, $\phi$ has a lot of symmetries and 
$$ r = \int_0^\pi (\phi_x(s))^2 ds =j \int_0^\frac{\pi}{j} (\phi_x(s))^2 ds.$$

Consider $\psi=\phi\big|_{[0,\frac{\pi}{j}]}$. Then, we have $\psi>0$ in $(0,\frac{\pi}{j})$,  $\|\psi_x\|^2_{L^2(0,\frac{\pi}{j})}=\frac{r}{j}$, and $\psi$ satisfies \eqref{eq:CI_intL}, for $L=\frac{\pi}{j}$ and $\lambda=\lambda_r$. Hence, by the definition of $c_1^\frac{\pi}{j}$, we find $c_1^\frac{\pi}{j}(\frac{r}{j})=\frac{1}{\lambda_r}=c_j^\pi(r)$.

\item It follows from the previous items.
\end{enumerate}
\end{proof}

The result from Proposition \ref{prop:rel_c_j} provides a very good understanding of the bifurcations of equilibria for \eqref{eq_non-local} with particular emphasis to the case of suitably large $j\in \mathbb{N}$. We remark that, if $f$ is odd, for large values of $j$, the functions $j^2c_j^{\pm}$ are very slowly decreasing. 

%%%%\bigskip

Next we exhibit a few pictorial examples of possible bifurcations that will happen depending on our choice of the functions $a$ and $f$.

\begin{example}\label{example:a_1}
	Consider in this example the function $a=a_1$ as in Figure \ref{fig:2}:

%%%%% SMALL
\begin{figure}[h!]
	\centering
	\includegraphics[scale=0.27]{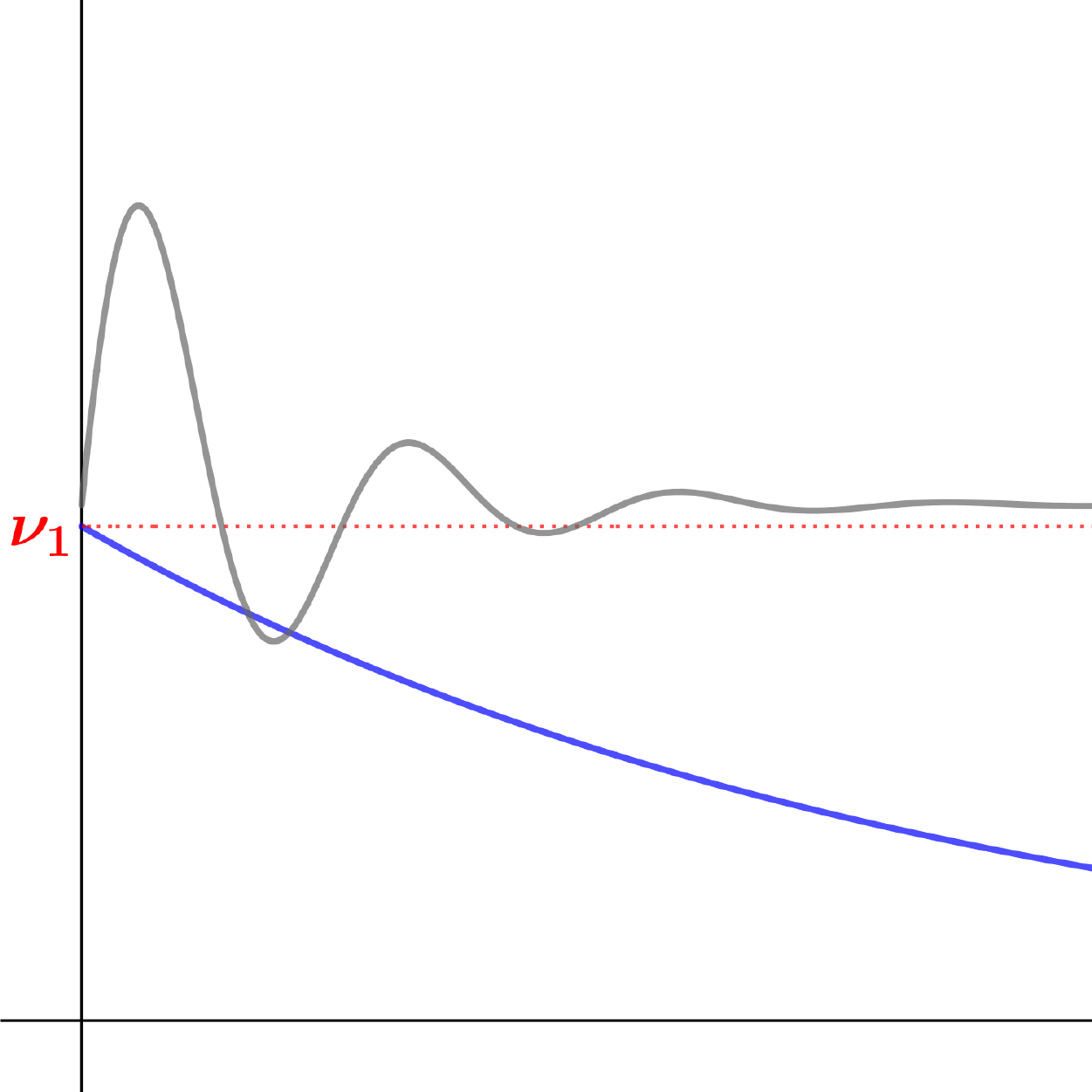} \hfill 
	\includegraphics[scale=0.27]{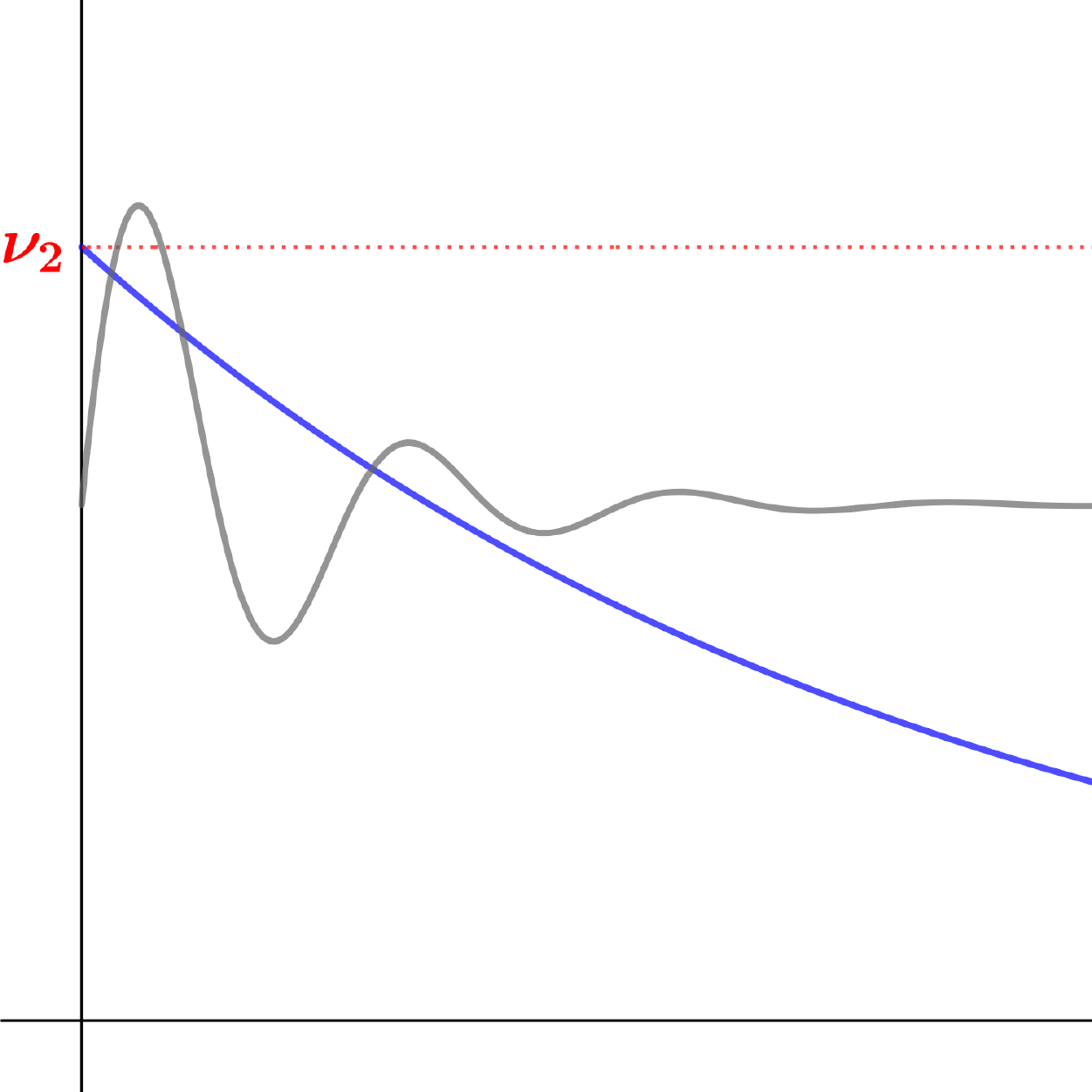} \hfill  
	\includegraphics[scale=0.27]{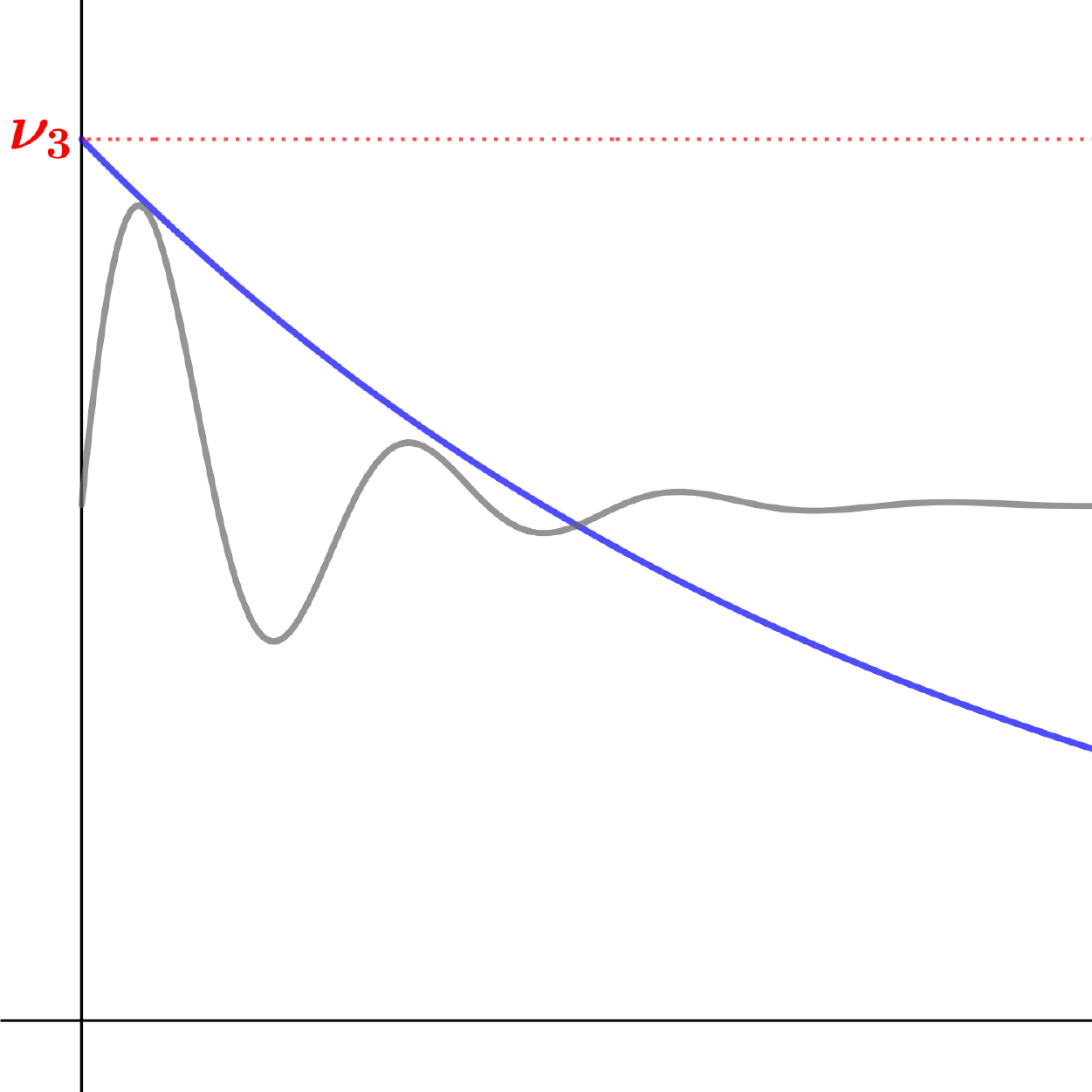} \hfill
	\includegraphics[scale=0.27]{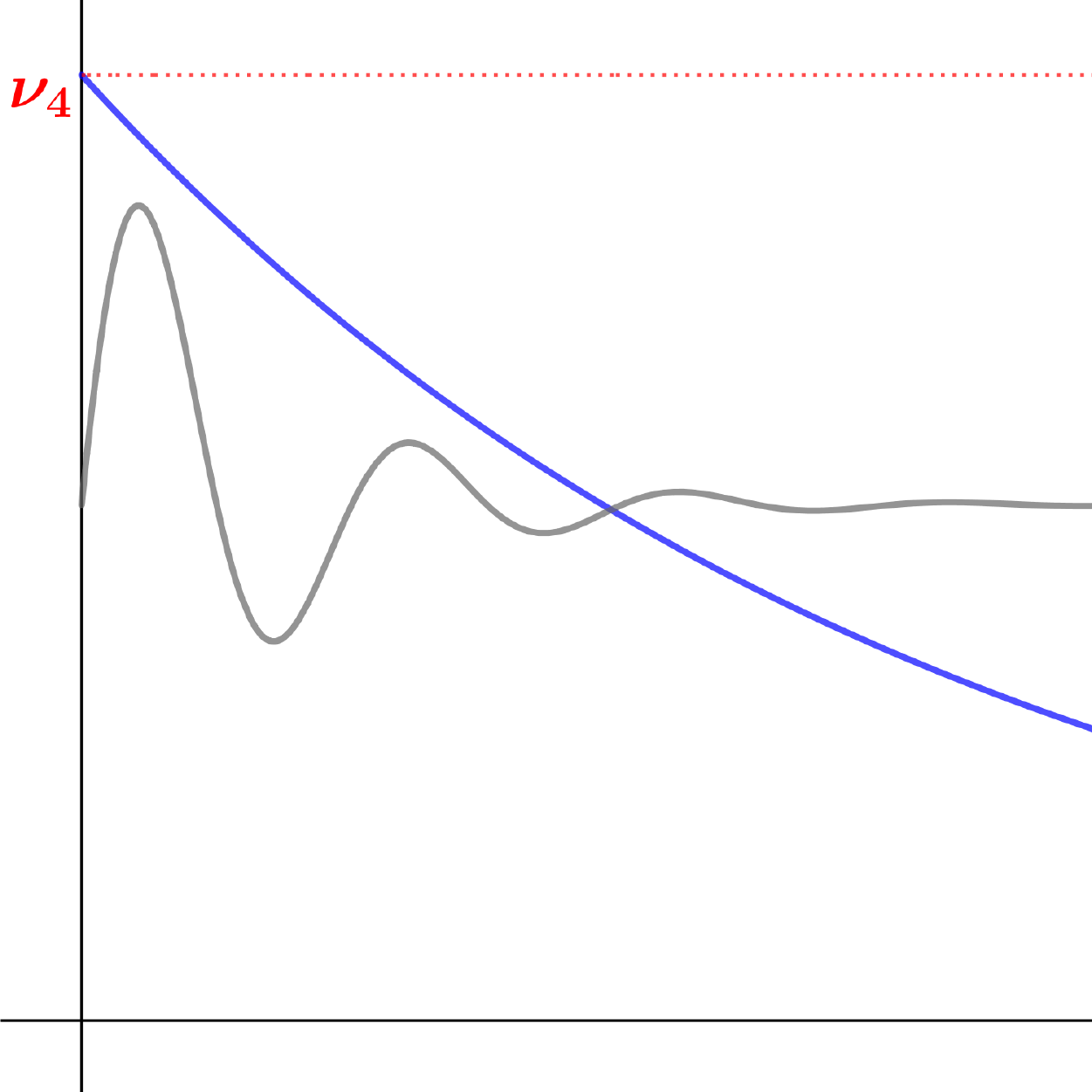} 
	\caption{Graphs of $a_1$ (in gray) and $\nu c_1^\pm$ (in blue) for different choices of $\nu$}\label{fig:2}
\end{figure}

%\begin{figure}[h!]
%	\includegraphics[scale=0.25]{A1.pdf} \hspace{0.5cm}
%	\includegraphics[scale=0.25]{A2.pdf} \hspace{0.5cm}	
%	\includegraphics[scale=0.25]{A3.pdf} 
%	
%	%%%
%	\vspace{0.5cm}
%	%%%%
%	\includegraphics[scale=0.25]{A4.pdf}
%	\caption{Graphs of $a_1$ and $\nu c_j^\pm$ for different choices of $\nu$}\label{fig:2}
%\end{figure}

\begin{multicols}{2}
\begin{minipage}[l]{9cm}
	
	In that case, the bifurcation from zero is a supercritical pitchfork bifurcation and four other saddle-node bifurcations occur, two subcritical and two supercritical. The bifurcation curve looks like this:
\end{minipage}	

\begin{minipage}{6cm}
\begin{flushright}
%\begin{figure}[htpb]
%	\centering
	\vspace{-0.2cm}
	\includegraphics[scale=0.2]{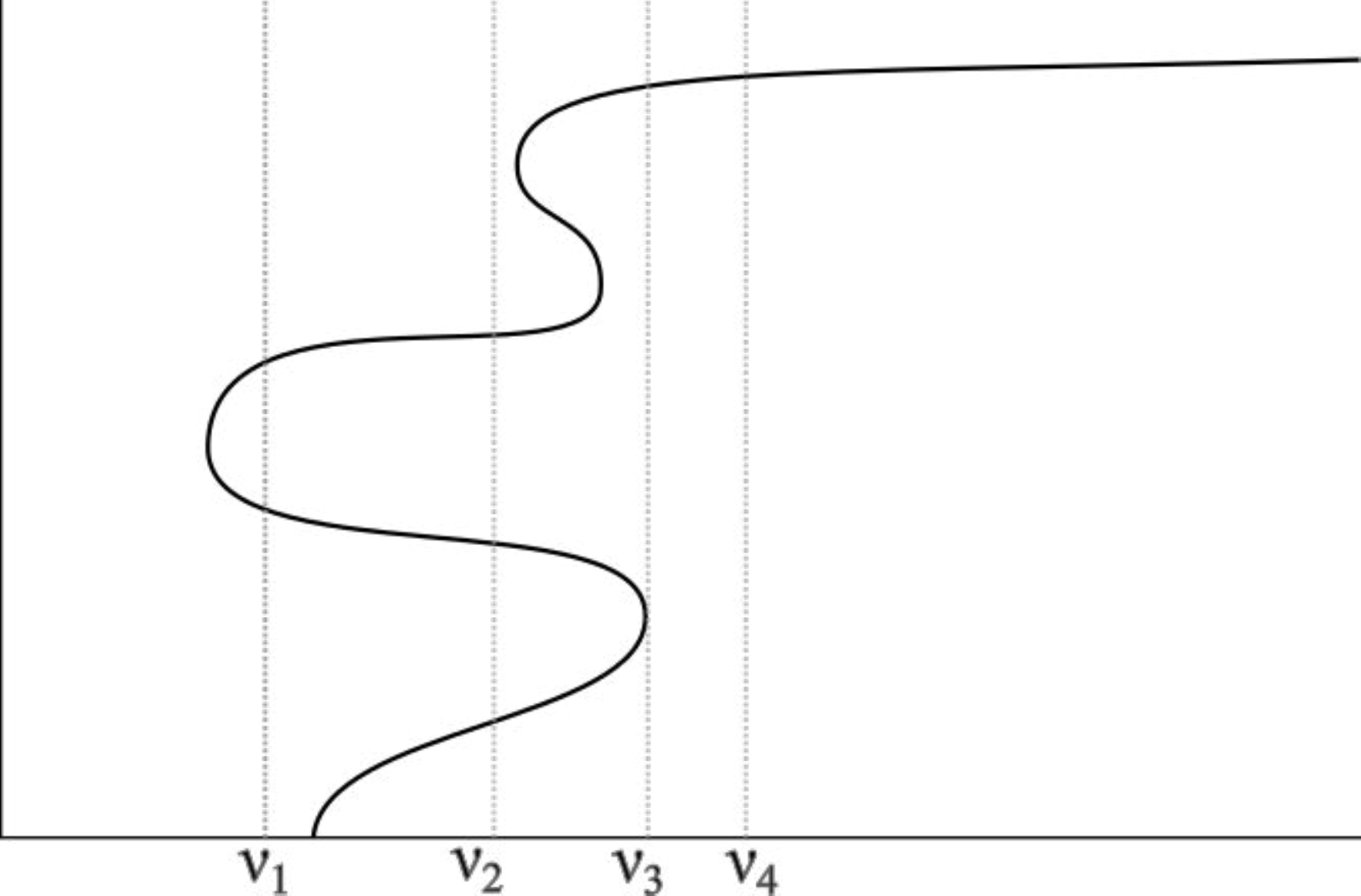}
%	\caption{Bifurcation curve. Intersections between the graphs of $a_1$ and $\nu c_j^+$}	\label{figure-ibgac}
%\end{figure}

\end{flushright}\end{minipage}

\end{multicols}
\end{example}

\begin{example} Consider in this example the function $a=a_2$, with graph pictured in gray, in Figure \ref{fig:graph_a2}:

	\begin{figure}[h!]
		\centering
	\includegraphics[scale=0.27]{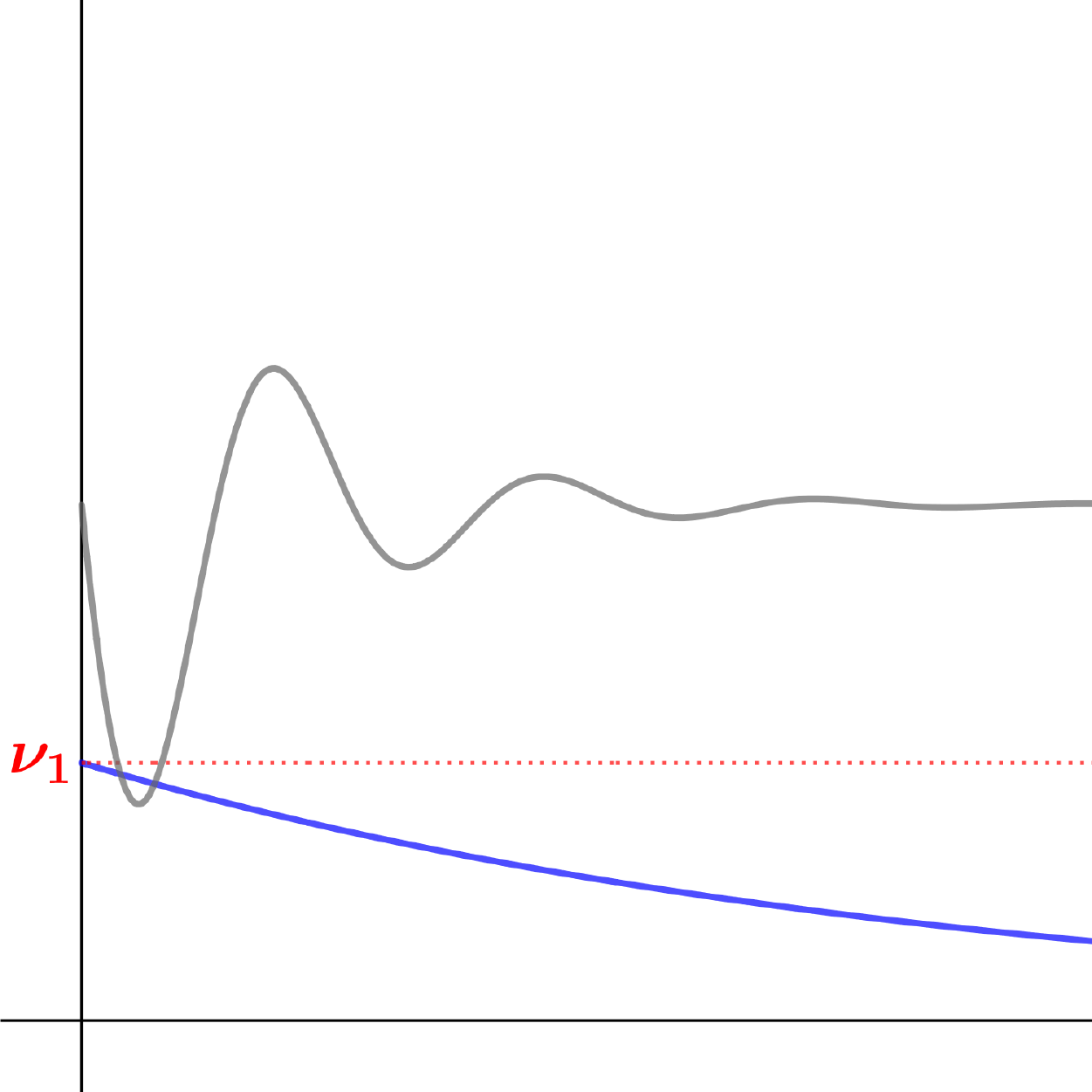} \hfill
	\includegraphics[scale=0.27]{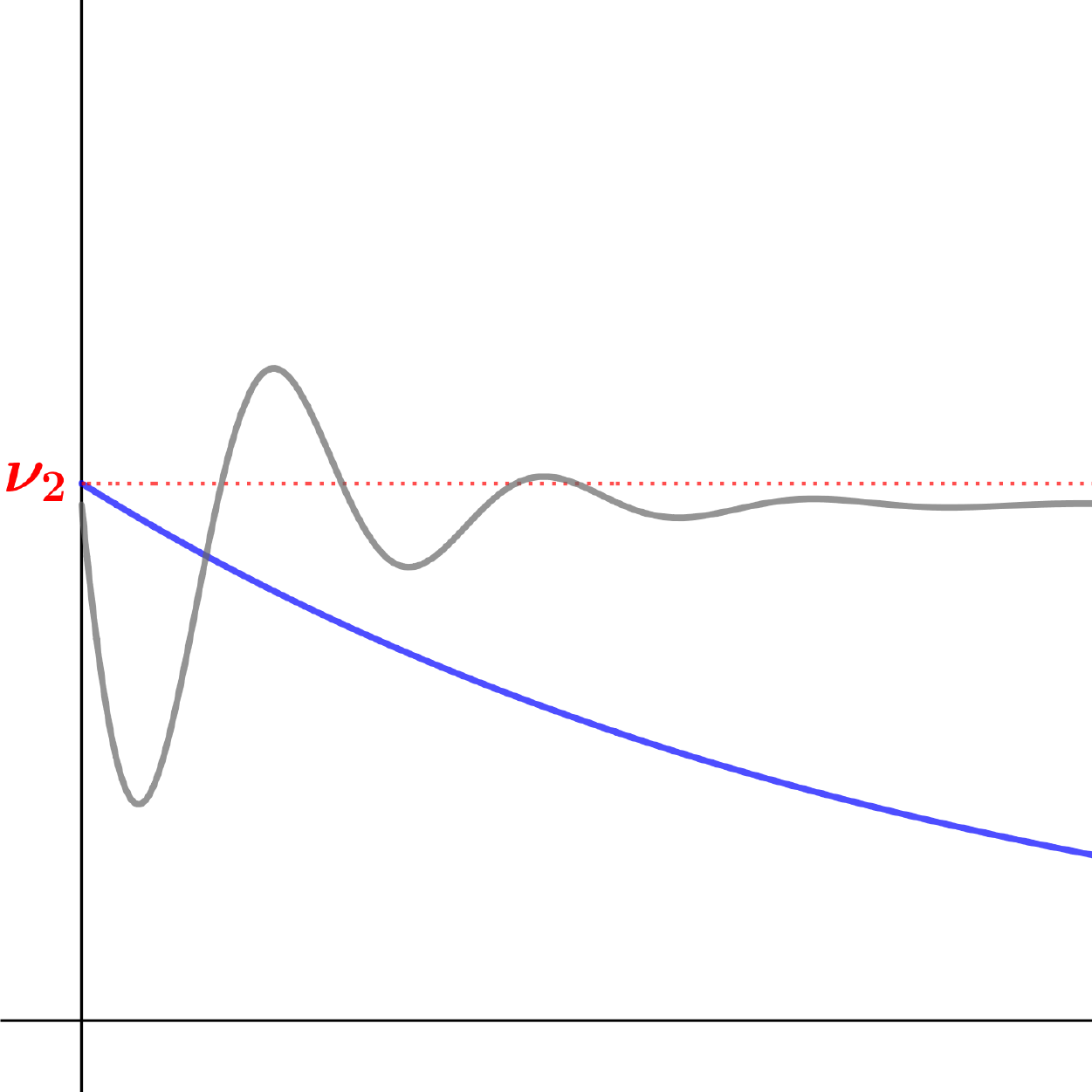} \hfill
	\includegraphics[scale=0.27]{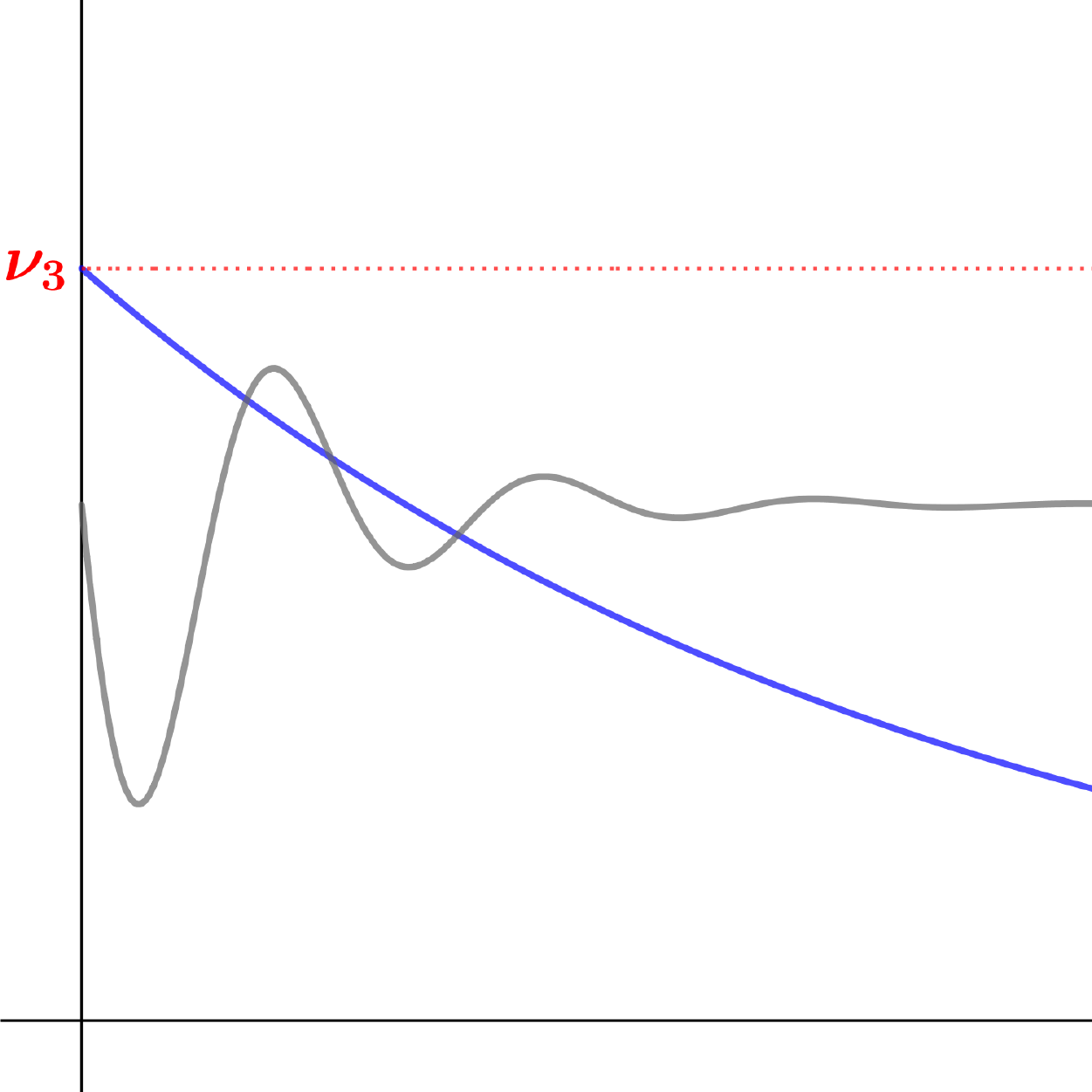} \hfill
	\includegraphics[scale=0.27]{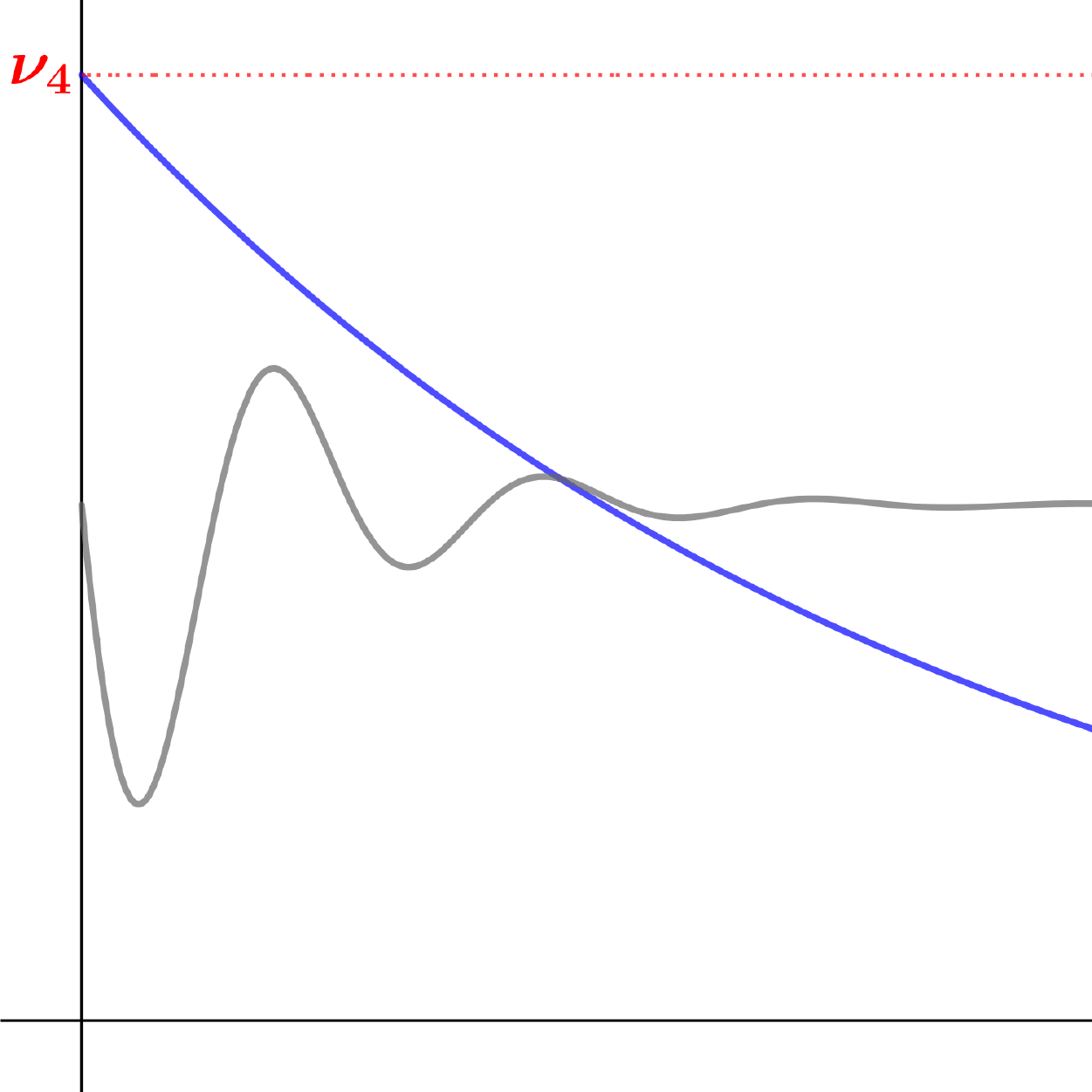} 
	\caption{Graphs of $a_2$ and $\nu c_1^\pm$ (in blue) for different choices of $\nu$}\label{fig:graph_a2}
\end{figure}

\begin{multicols}{2}
	\begin{minipage}{9cm}

In that case, the bifurcation from zero is a subcritical pitchfork bifurcation and three other saddle-node bifurcations occur, two supercritical and one subcritical. The bifurcation curve looks like this:
\end{minipage}

\begin{minipage}{6cm}
\begin{flushright}
	%	\begin{figure}[h!]

%		\centering
		\includegraphics[scale=0.2]{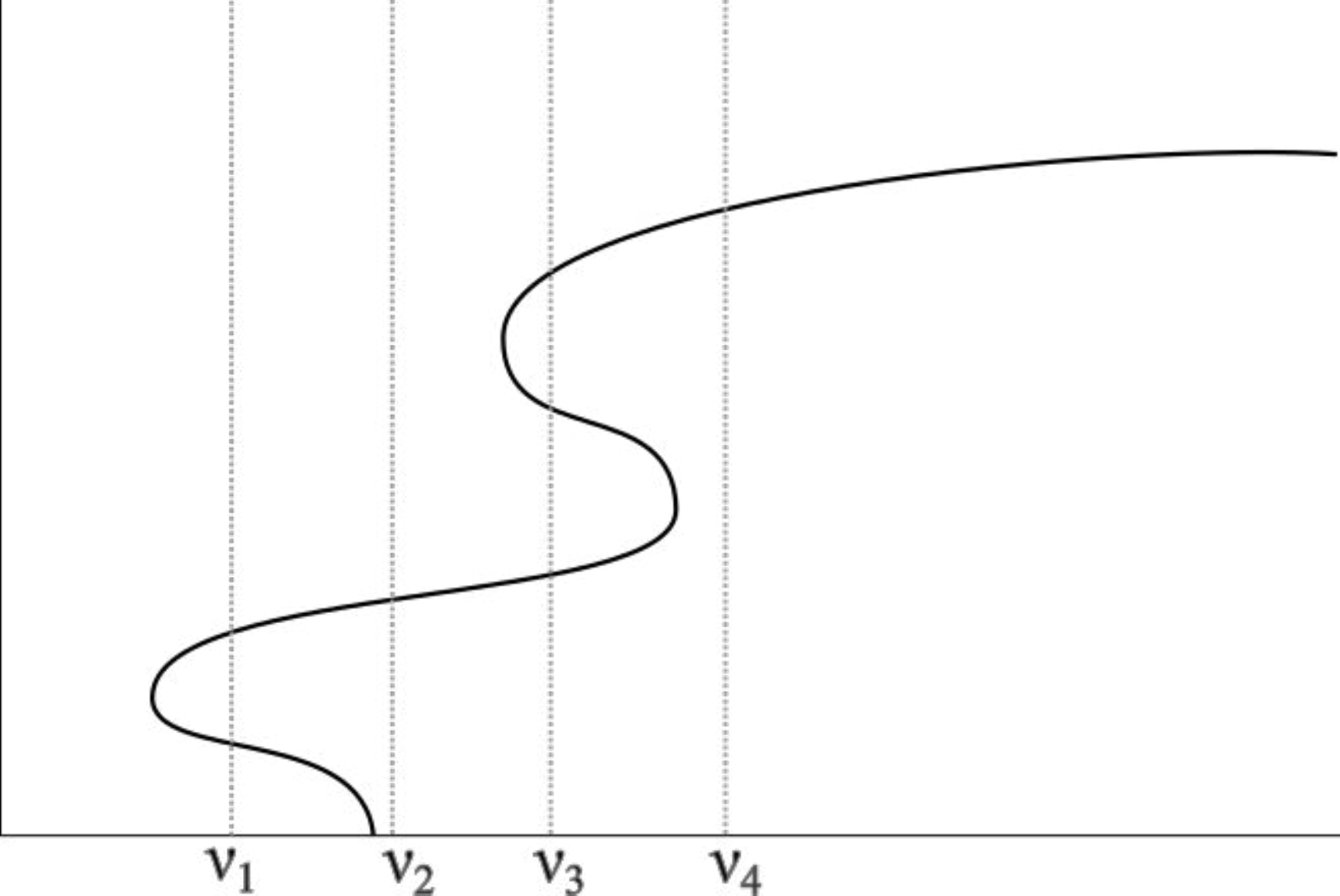}
%		\c%aption{Bifurcation curve. Intersections between the graphs of $a_2$ and $\nu c_j^\pm$ as a function of $\nu$}
%	\end{figure}
\end{flushright}
\end{minipage}
\end{multicols}

\end{example}

\begin{example} Consider the function given by $a=a_3$, with graph pictured in gray, as in Figure \ref{fig:a3}.
	
%	\begin{figure}[htbp]
%	\centering
%	\noindent
%	\includegraphics[scale=0.2]{C1.pdf} %\hspace{0.4cm}
%	\includegraphics[scale=0.2]{C2.pdf} %\hspace{0.4cm}	
%	\includegraphics[scale=0.2]{C3.pdf} %\hspace{0.5cm}
%	\includegraphics[scale=0.2]{C4.pdf} %\hspace{0.4cm}	
%	\includegraphics[scale=0.2]{C5.pdf} %\hspace{0.4cm}
%	\includegraphics[scale=0.2]{C6.pdf}
%	\caption{Graphs of $a_3$, $\nu c_1^\pm$ and $\nu c_2^\pm$ for different choices of $\nu$}
%	\label{fig:a3}
%\end{figure}

	\begin{figure}[htbp]
		\centering
\noindent
		\includegraphics[scale=0.26]{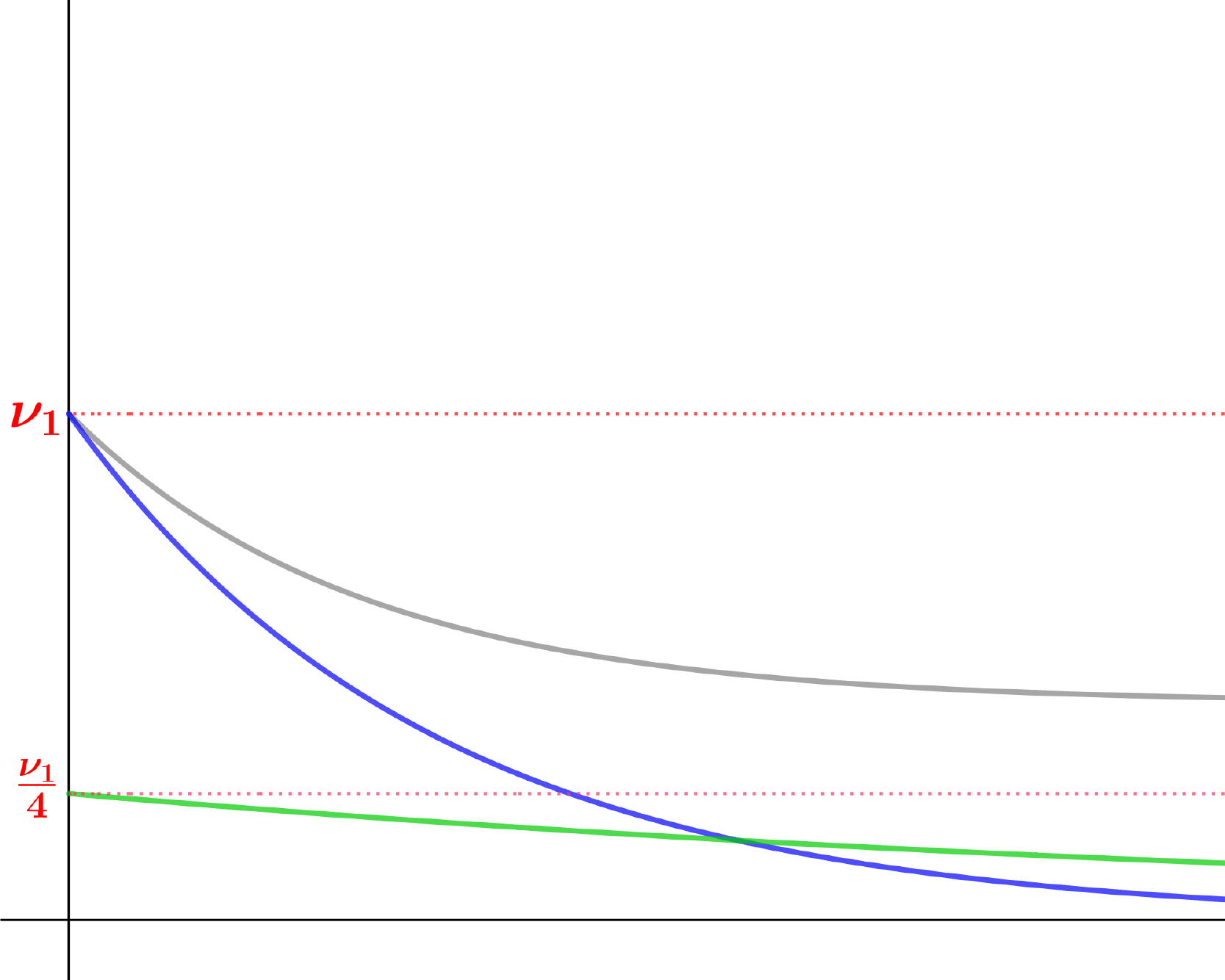} \hspace{0.4cm}
		\includegraphics[scale=0.26]{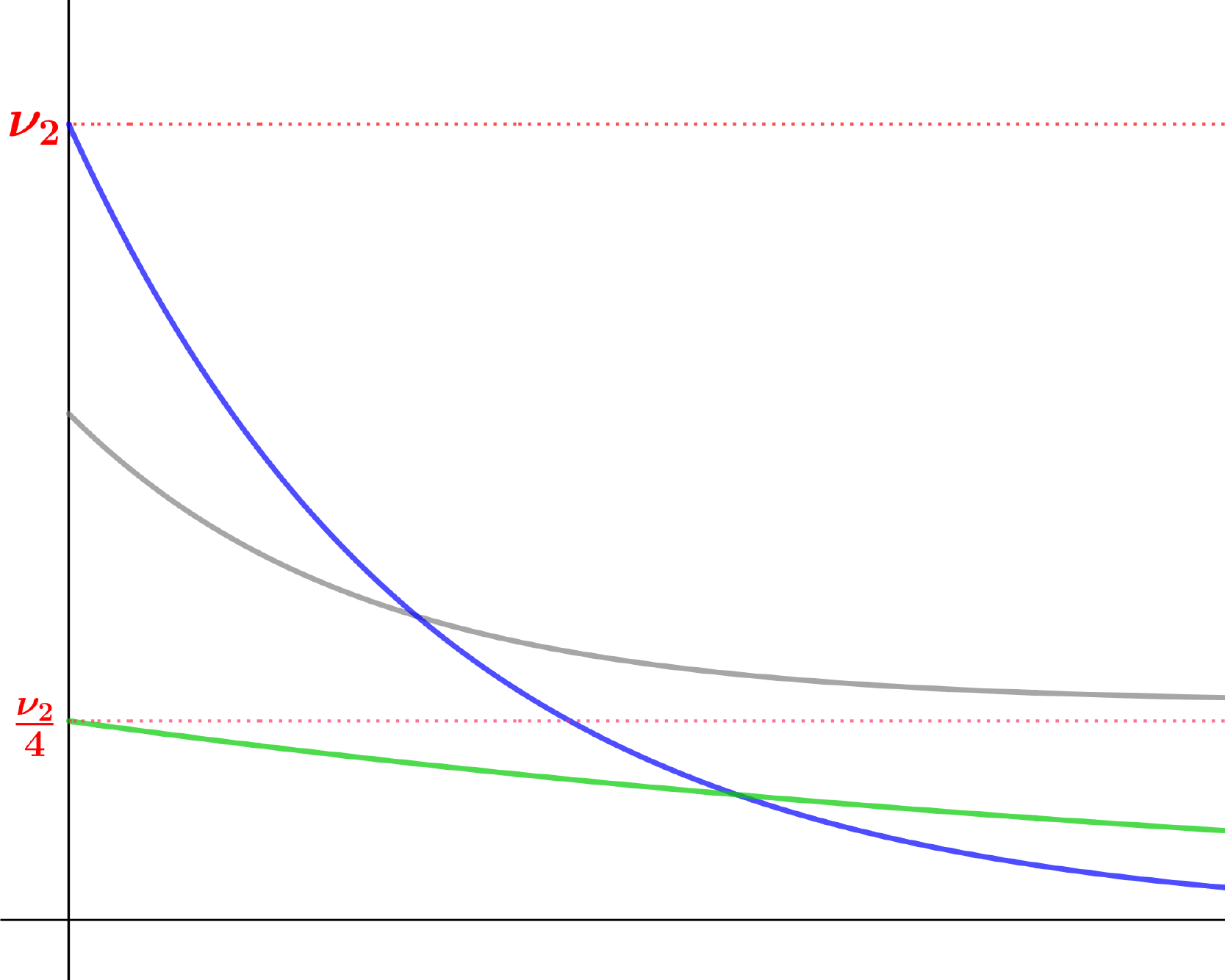} \hspace{0.4cm}	
		\includegraphics[scale=0.26]{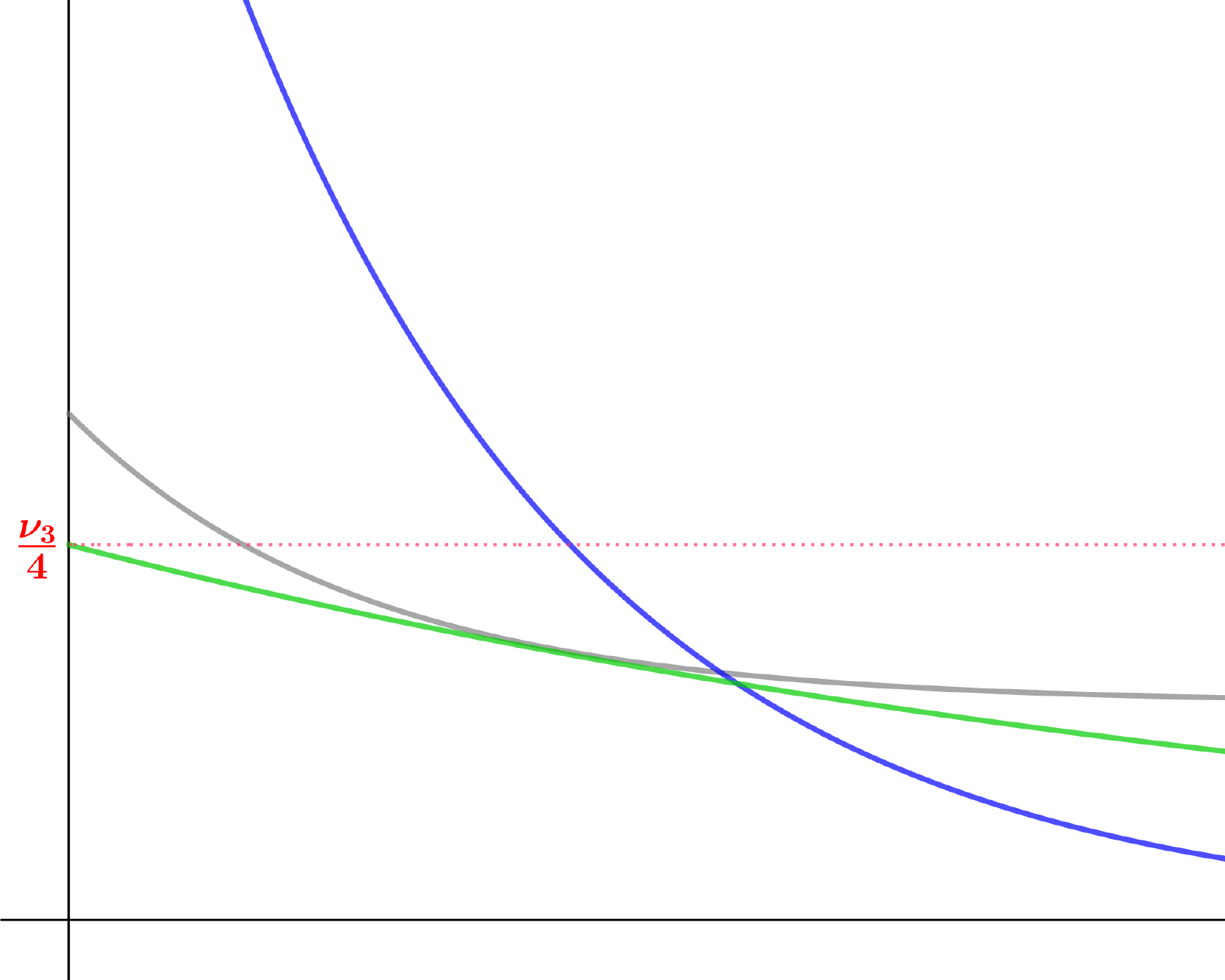} %\hspace{0.5cm}

\vspace{0.5cm}

		\includegraphics[scale=0.26]{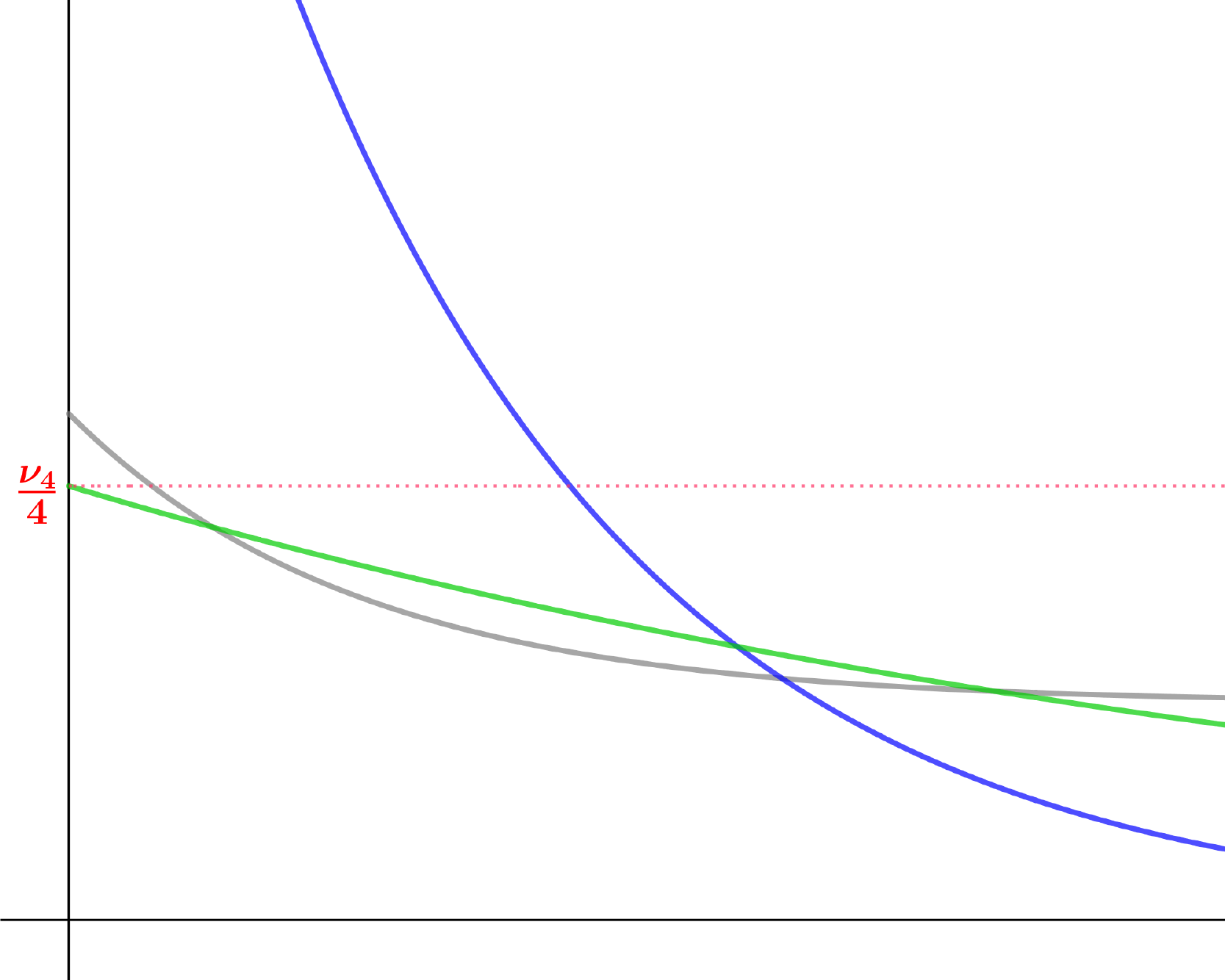} \hspace{0.4cm}	
		\includegraphics[scale=0.26]{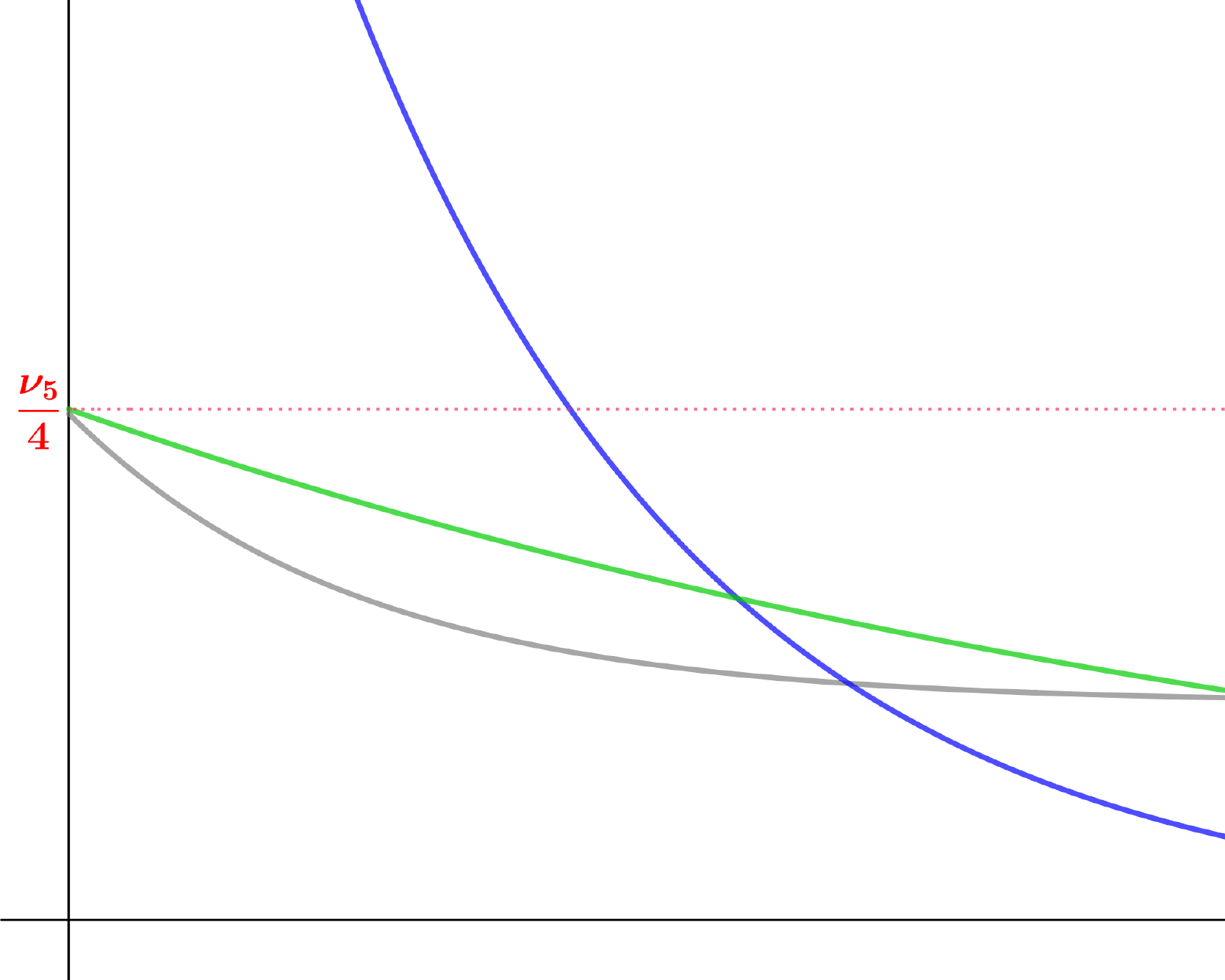} \hspace{0.4cm}
		\includegraphics[scale=0.26]{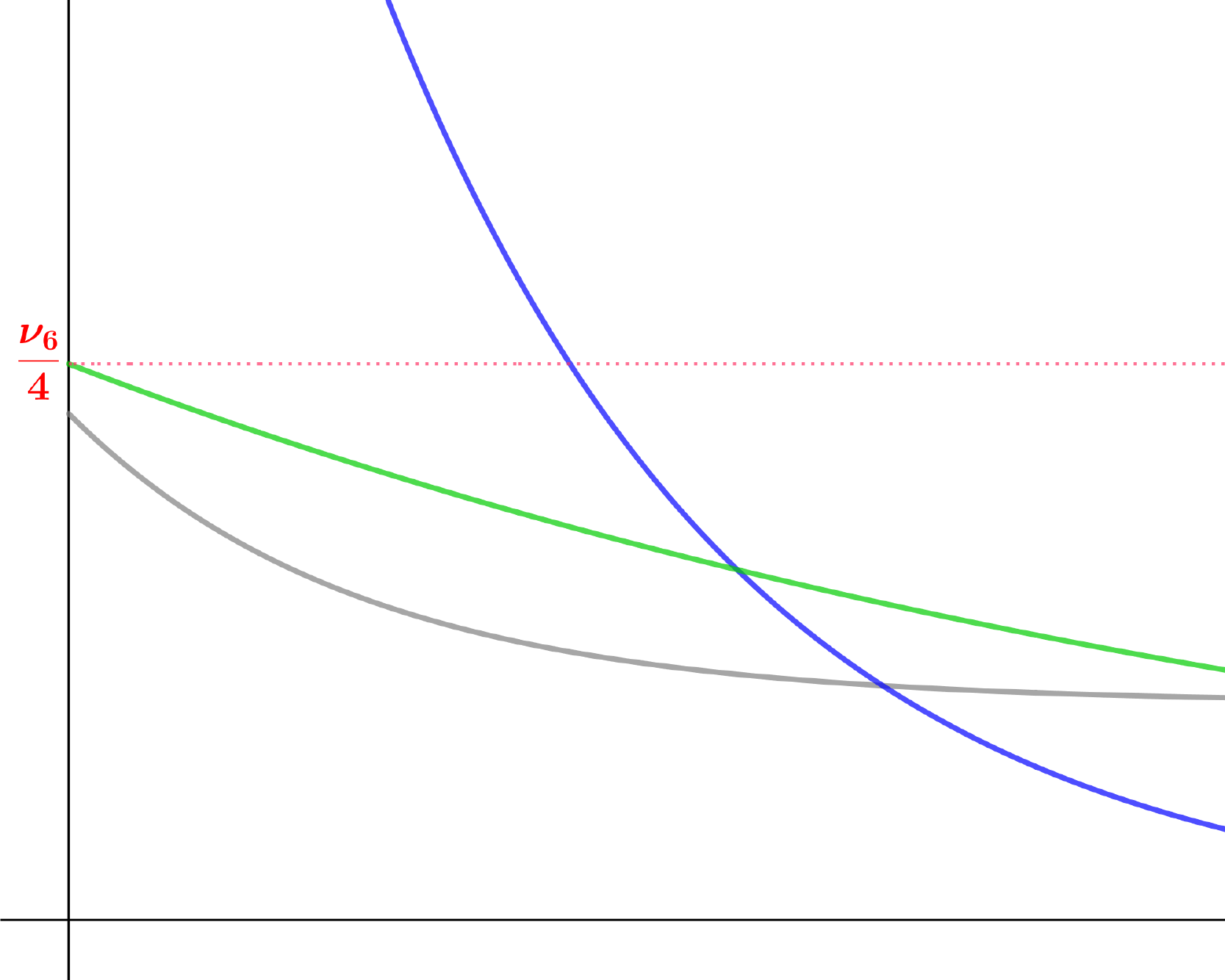}
		\caption{Graphs of $a_3$ (in gray), $\nu c_1^\pm$ (in blue) and $\nu c_2^\pm$ (in green) for different choices of $\nu$}
		 \label{fig:a3}
	\end{figure}

\begin{multicols}{2}
\begin{minipage}{9 cm}

The first bifurcation from zero  is a supercritical pitchfork bifurcation and the second bifurcation from zero is a supercritical saddle-node bifurcation. 

In this case, the diagram representing the two bifurcations from zero is similar to the figure:

\end{minipage}

\begin{minipage}{6 cm}
\begin{flushright}
%	\begin{figure}[h!]
	%		\centering
	\includegraphics[scale=0.2]{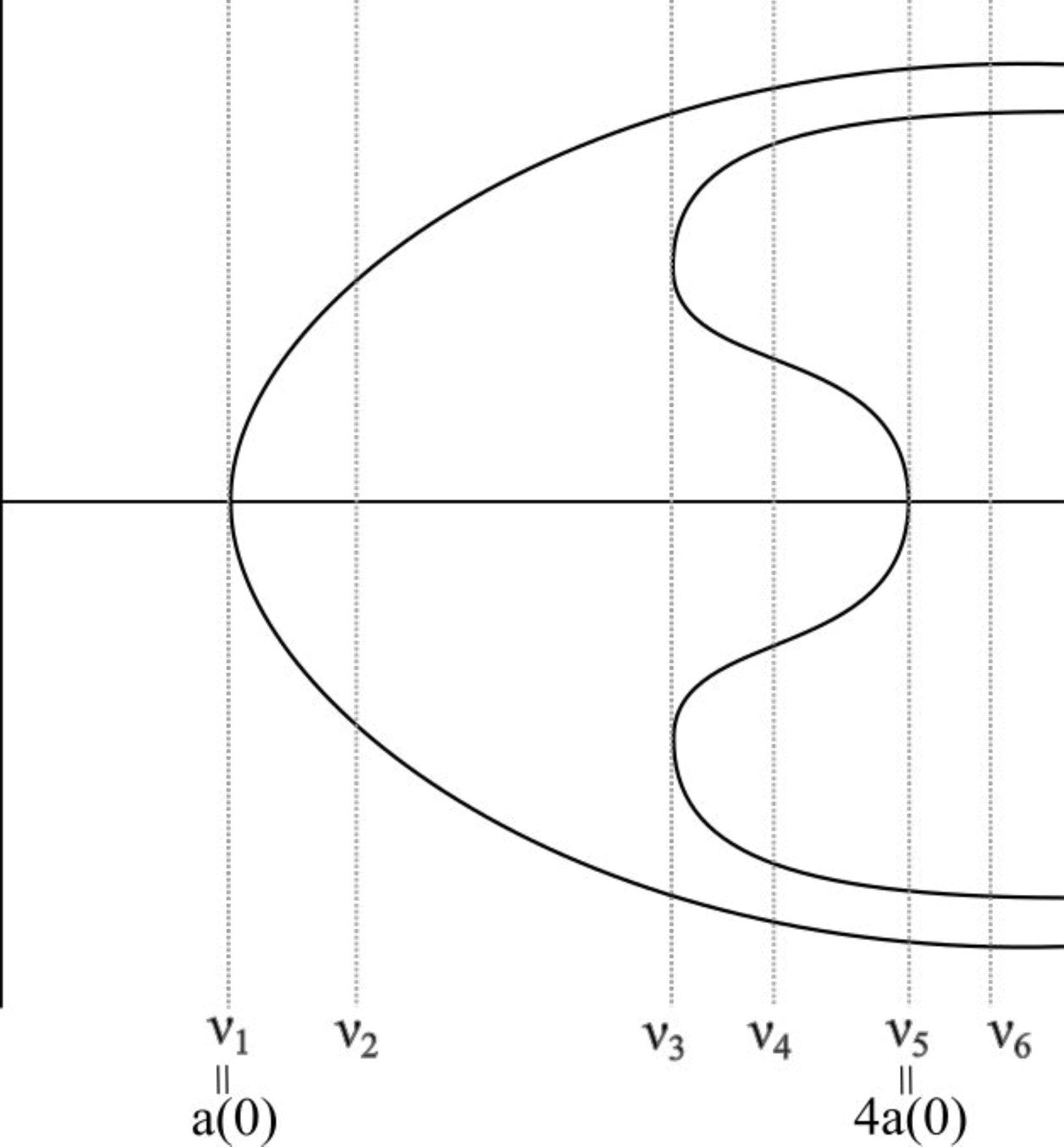}
	%	\caption{Bifurcation curve. Intersections between the graphs of $a_3$ and $\nu c_j^{\pm}$ as a function of $\nu$}	
	%	\end{figure}
\end{flushright}	
\end{minipage}	
\end{multicols}	

Suppose that $\nu_3 \in (\nu_1,\nu_5)$ is the moment for which the saddle-node bifurcation of the equilibria that change sign one time in $(0,\pi)$ appears. In this case, if $f$ is odd, a pictorial representation of the global attractor is given in Figure \ref{attrFig}.	
	
\begin{figure} [h]
		\centering
\includegraphics[scale=0.24]{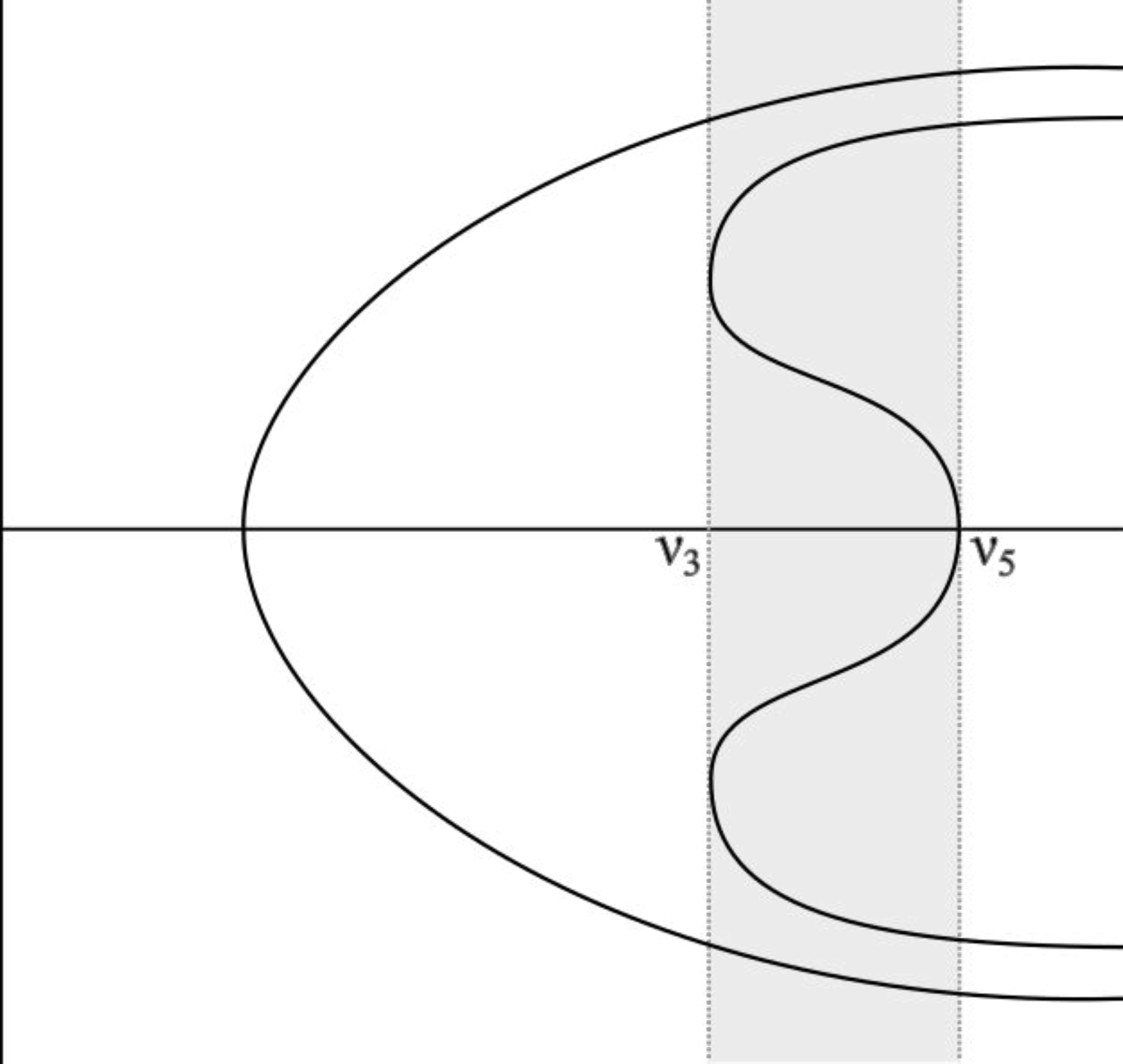}
\hspace{0.5cm}
\includegraphics[scale=0.55]{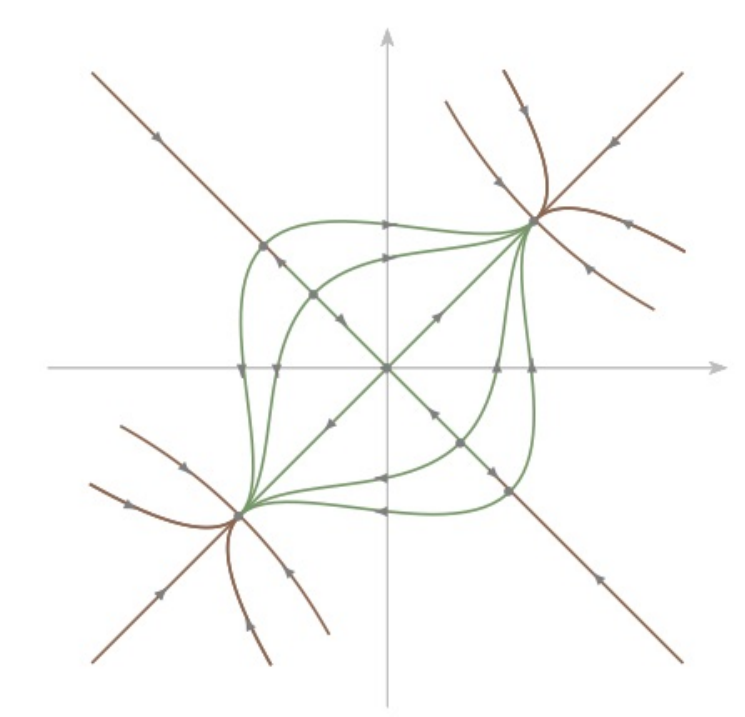} 
\caption{Expected structure of the attractor, when $\nu \in (\nu_3,\nu_5)$.}
\label{attrFig}
\end{figure}

For $\nu\in (\nu_3,\nu_5)$, it is also expected that the two more unstable equilibria collapses at $0$ as $\nu$ approaches $4a(0)$.
\end{example}

\section*{Acknowledgments}

This work was carried out while the third author (EMM) visited the Centro de Investigaci\'on Operativa, UMH de Elche. During this period, she had the opportunity to visit the Universidad Complutense de Madrid. She wishes to express her gratitude to the people from CIO and the UCM for the warm reception and kindness.

%\vfill\eject

\bibliography{biblio}
\bibliographystyle{plain}

\end{document}